\documentclass{amsart}
\usepackage[margin=1.5in]{geometry}
\usepackage{amssymb,amsmath,array,multirow,makecell,blindtext,amsthm,enumitem,mathtools,amscd,tikz-cd,amsrefs,dsfont,hyperref,url,caption,float,placeins,bm,color,mathrsfs,comment,float}
\usepackage[font=small, labelfont=bf]{caption}
\hypersetup{
    colorlinks=true,
    linkcolor=blue,
    citecolor=black,
    filecolor=magenta,      
    urlcolor=cyan,
}
\usepackage[all]{xy}
\usepackage{graphicx}
\usetikzlibrary{positioning}
\usepackage{caption}
\usepackage{graphicx}

 \DeclareFontFamily{U}{wncy}{}
    \DeclareFontShape{U}{wncy}{m}{n}{<->wncyr10}{}
    \DeclareSymbolFont{mcy}{U}{wncy}{m}{n}
    \DeclareMathSymbol{\Sha}{\mathord}{mcy}{"58}

    \definecolor{ForestGreen}{RGB}{34,139,34}
\newcommand{\green}[1]{\textcolor{ForestGreen}{#1}}

\theoremstyle{plain}

\newtheorem{theorem}{Theorem}[section]
\newtheorem{corollary}[theorem]{Corollary}
\newtheorem{lemma}[theorem]{Lemma}
\newtheorem{remark}[theorem]{Remark}
\newtheorem{proposition}[theorem]{Proposition}
\newtheorem{definition}[theorem]{Definition}
\newtheorem{defn}[theorem]{Definition}

\newtheorem{example}[theorem]{Example}
\numberwithin{theorem}{section}

\newcommand{\Z}{\mathbb{Z}}
\newcommand{\Q}{\mathbb{Q}}
\newcommand{\QQ}{\mathbb{Q}}
\newcommand{\Qp}{\mathbb{Q}_p}

\newcommand{\R}{\mathbb{R}}
\newcommand{\C}{\mathbb{C}}

\newcommand{\Oo}{\mathcal{O}}
\newcommand{\Gg}{\mathcal{G}}

\newcommand{\Gal}{\operatorname{Gal}}

\newcommand{\sgn}{\operatorname{sgn}}

\newcommand{\Hom}{\operatorname{Hom}}
\newcommand{\Div}{\operatorname{Div}}

\newcommand{\ord}{\operatorname{ord}}

\newcommand{\GL}{\operatorname{GL}}

\newcommand{\nf}{\normalfont}

\newcommand{\e}{\varepsilon}

\newcommand{\Mod}[1]{\ \mathrm{mod}\ #1}

\newcommand{\cyc}{{\mathrm{cyc}}}

\newcolumntype{?}{!{\vrule width 1pt}}
\newcommand{\lb}{[\![}
\newcommand{\rb}{]\!]}

\newcommand{\cO}{\mathcal{O}}
\newcommand{\cG}{\mathcal{G}}
\newcommand{\cLog}{\mathcal Log}

\newcommand{\RGL}[1]{{\color{blue}#1}}

\newcommand{\FF}{\mathbb{F}}

\newcommand{\ZZ}{\Z}
\newcommand{\Zp}{\Z_p}
\newcommand{\Up}{\Upsilon}
\newcommand{\Tw}{\mathrm{Tw}}

\newtheorem{lthm}{Theorem} 

\title[A canonical construction of signed $p$-adic $L$-functions]{A canonical construction of signed $p$-adic $L$-functions for non-ordinary modular forms of weight $\leq p+1$}

\author[R.~Gajek-Leonard]{Rylan Gajek-Leonard}
\address[Gajek-Leonard]{Department of Mathematics\\
Union College\\
Bailey Hall 206B\\
Schenectady, NY, 12308\\
USA}
\email{gajekler@union.edu}

\author[A.~Lei]{Antonio Lei}
\address[Lei]{
Department of Mathematics and Statistics\\
University of Ottawa\\
150 Louis-Pasteur Pvt\\
Ottawa, ON, K1N 6N5\\
Canada}
\email{antonio.lei@uottawa.ca}

\subjclass[2020]{Primary 11R23; Secondary 11F33}
\keywords{Iwasawa theory, modular forms, Mazur--Tate elements, non-ordinary primes}

\begin{document}

\begin{abstract}
Fix an odd prime $p$ and let $f$ be a $p$-non-ordinary cuspidal eigen-newform of weight $2\leq k\leq p+1$. We construct a pair of bounded $p$-adic $L$-functions associated to $f$ by decomposing the unbounded $p$-adic $L$-functions in terms of an explicit logarithm-type matrix whose definition does not require $p$-adic Hodge theory. Using this decomposition, we compute asymptotic formulas for the Iwasawa invariants of Mazur--Tate elements attached to non-ordinary forms of weight $\leq p+1$. As a corollary, we obtain a relation between the signed Iwasawa invariants of $p$-non-ordinary and $p$-congruent cuspforms of weights 2 and $p+1$, generalizing previous results in the $a_p=0$ case. 
\end{abstract}

\maketitle

\section{Introduction}

Let $f=\sum_{n\geq1} a_nq^n$ be a cuspidal newform of weight $k\geq 2$, level $\Gamma_1(N)$, and nebentype character $\e$. Fix an odd prime $p\nmid N$ 
and let $K/\Q_p$ denote the extension generated by the images of $a_n$ under a fixed choice of embedding $\iota_p:\overline{\Q}\hookrightarrow\overline{\Q}_p$. We write $\Oo$ for the valuation ring of $K$ and $\FF$ for the residue field. Throughout this article, we assume that $f$ is non-ordinary at $p$, i.e., $a_p\notin\cO^\times$.

Attached to each root of the Hecke polynomial of $f$ at $p$, there is a $p$-adic $L$-function that interpolates the critical $L$-values of $f$ (see \cite{amicevelu75,visik76,MTT}). More precisely, if $\alpha$ and $\beta$ are the two roots of the polynomial $X^2-a_pX+\e(p)p^{k-1}$, there are two $p$-adic distributions $\mu_{f,\alpha}$ and $\mu_{f,\beta}$ on $\Gal(\QQ(\mu_{p^\infty})/\QQ)\cong\Zp^\times$. 
Let $\omega$ be the Teichm\"uller character. Via the Mellin transform, for each $0\le i \le p-2$ and $\Upsilon \in\{\alpha,\beta\}$, the $\omega^i$-isotypic component of $\mu_{f,\Upsilon}$ gives rise to a power series $L_p(f,\Upsilon,\omega^i,X)\in K(\Upsilon)\lb X\rb$.
Unlike the ordinary case, the denominators of these power series can have arbitrarily large $p$-adic valuations. In other words, $\mu_{f,\alpha}$ and $\mu_{f,\beta}$ are not necessarily $p$-adic measures. In \cite{pollack03}, Pollack showed that when $a_p=0$, these power series can be decomposed as linear combinations of elements in $\cO\lb X\rb\otimes K$, which are called plus and minus $p$-adic $L$-functions. The hypothesis $a_p=0$ has been relaxed by Sprung when the weight of $f$ is $2$ in \cite{sprung09,Sprung17}, where a $2\times 2$ logarithmic matrix is constructed to decompose the $p$-adic $L$-functions $L_p(f,\alpha,\omega^i,X)$ and $L_p(f,\beta,\omega^i,X)$ into the sharp and flat $p$-adic $L$-functions $L_p(f,\sharp,\omega^i,X),L_p(f,\flat,\omega^i,X)\in \cO\lb X\rb$.

For higher weight modular forms, similar results were obtained in \cite{LLZ0}. The logarithmic matrix therein is dependent on the choice of a basis of the Wach module of the $p$-adic representation attached to $f$. Although there is no canonical choice of basis, the results in \cite{CL,GL} show that if we make use of an explicit basis given in \cite{berger04}, the Iwasawa invariants of the sharp and flat $p$-adic $L$-functions encode arithmetic information of the Mazur--Tate elements attached to $f$ under the Fontaine--Laffaille condition, that is, $p>k$.

In this article, we construct sharp and flat $p$-adic $L$-functions under the condition that $k \le p+1$ without the use of Wach modules. In particular, our construction can be considered more canonical since it does not depend on the choice of a Wach module basis. 
We assume throughout that $\alpha\neq\beta$.


\begin{lthm}[Theorem~\ref{thm:Lpdecomp}] \label{thmA}
Let $f\in S_k(\Gamma_1(N),\overline\Q_p)$ be a $p$-non-ordinary cuspidal eigen-newform and let $0\leq i\leq p-2$. Assume $ k\leq p+1$.
There exists a $2\times 2$ matrix $\cLog_\infty (f)$ with coefficients in $K(\alpha,\beta)\lb X\rb$
and functions $L_p(f,\sharp,\omega^i,X),L_p(f,\flat,\omega^i,X)\in\Oo\lb X\rb \otimes K$ such that 
\[
\frac{1}{\alpha-\beta}
\begin{bmatrix} L_p(f,\alpha,\omega^i,X)\\L_p(f,\beta,\omega^i,X)\end{bmatrix}
=\cLog_\infty (f) \begin{bmatrix} L_p(f,\sharp,\omega^i,X)\\L_p(f,\flat,\omega^i,X)\end{bmatrix}.
\]
\end{lthm}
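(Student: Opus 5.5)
We would follow Sprung's approach via Mazur--Tate elements and the three-term relation. Write $\theta_n(f,\omega^i)\in\cO[X]/(\omega_n)$ for the $\omega^i$-isotypic Mazur--Tate element at level $n+1$, where $\omega_n=(1+X)^{p^n}-1$. Modular-symbol integrality (after scaling by a fixed period) makes these elements integral. Let $\Phi_n$ be the $p^n$-th cyclotomic polynomial in $1+X$, and let $\pi_{n+1,n}$ and $\nu_{n,n+1}$ denote corestriction and the norm-type inclusion. The distribution relation then reads
\[
\pi_{n+1,n}\theta_{n+1}=a_p\theta_n-\e(p)p^{k-2}\nu_{n-1,n}\theta_{n-1}.
\]
The $p$-adic $L$-function $L_p(f,\Upsilon,\omega^i,X)$ is, modulo $\omega_n$, equal to $\Upsilon^{-(n+1)}(\theta_n-\e(p)p^{k-2}\Upsilon^{-1}\nu_{n-1,n}\theta_{n-1})$ up to the usual normalization. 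Rewriting this recursion in matrix form gives
\[
\begin{bmatrix}\theta_{n}\\ \nu\theta_{n-1}\end{bmatrix}\equiv Q_n\cdots Q_1\begin{bmatrix}\theta_0\\ \ast\end{bmatrix},
\qquad
C_m=\begin{bmatrix} a_p & 1\\ -\e(p)p^{k-1}\Phi_m^{-1}\cdot \Phi_m/p\cdots\end{bmatrix},
\]
and we will set it up as follows. Define
\[
C_m=\begin{bmatrix} a_p & 1\\ -\e(p)p^{k-2}\Phi_m & 0\end{bmatrix},\qquad
A=\begin{bmatrix} a_p & 1\\ -\e(p)p^{k-1} & 0\end{bmatrix},
\]
together with $\cLog_n=C_1\cdots C_n A^{-n-2}$, and take $\cLog_\infty=\lim_n \cLog_n$ after conjugating by the matrix $\begin{bmatrix}\alpha&-\beta\\-1&1\end{bmatrix}$. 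This conjugation diagonalizes $A$ and produces the factor $1/(\alpha-\beta)$. The entries of $\cLog_\infty$ are then explicit infinite products and sums of cyclotomic polynomials. No Wach module enters, which is the "canonical" feature.

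The argument proceeds in three steps. The first step shows that $\cLog_n$ converges in $K(\alpha,\beta)\lb X\rb$. Since $\Phi_m(X)\equiv p$ on the region of interest, $C_m A^{-1}$ tends to the identity. The key estimate is $C_m-A=\begin{bmatrix}0&0\\ -\e(p)p^{k-2}(\Phi_m-p)&0\end{bmatrix}$, combined with a bound $\|A^{-m}\|\le |\alpha|^{-m}$-type growth. Entries then converge as $O(\log^{?})$ functions of growth order $(k-1)/2$ in the sense of Amice--Vélu. Here we need $v_p(\alpha),v_p(\beta)<k-1$ and use that $\Phi_m-p$ is divisible by $X$ with norm decreasing like $p^{-1/p^{m-1}(p-1)}$ on closed discs.

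The second step is to use the three-term relation to show, by induction on $n$, that there exist integral $\theta^\sharp_n,\theta^\flat_n\in\cO[X]/(\omega_n)$ with
\[
\begin{bmatrix}\theta_n\\ -\e(p)p^{k-2}\nu\theta_{n-1}\end{bmatrix}=C_1\cdots C_n\begin{bmatrix}\theta_n^\sharp\\ \theta_n^\flat\end{bmatrix}\bmod\omega_n,
\]
and that these are compatible under $\pi_{n+1,n}$. This is where $k\le p+1$ enters. Inverting $C_m$ costs a factor $p^{-(k-2)}\Phi_m^{-1}$. To keep the $\sharp/\flat$ elements integral, one needs $\theta_{n}$ to vanish modulo $\Phi_n$ to order matched by $p^{k-2}$. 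On $\Phi_n$-components one has $\Phi_n\mid p$ "to exponent" $p^{n-1}(p-1)\ge k-2$ requires $k-2\le p-1$. The plan is to verify divisibility of $\pi_{n,n-1}$-kernels using the congruence $\Phi_n\equiv p\bmod \omega_{n-1}$ together with $p^{k-2}\mid(\text{something of valuation }\ge k-2)$, which holds exactly when $k-2\le p-1$. I expect this integrality step to be the main obstacle, especially the boundary case $k=p+1$, where $a_p$ may have valuation $\ge1$ but the Hodge–Tate gap is $p$. I would first try to handle it by directly computing the kernel of $\cO[X]/\omega_n\to\cO[X]/\omega_{n-1}$, namely $\Phi_n\cdot\cO[X]/\omega_n$, and using $\Phi_n^{p-1}\cdots$-type relations.

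The third step takes inverse limits. By compactness, the compatible integral systems $(\theta_n^\sharp)$ and $(\theta_n^\flat)$ define $L_p(f,\sharp),L_p(f,\flat)\in\cO\lb X\rb$, up to the period normalization, which explains the $\otimes K$ in the statement. Comparing with the interpolation of $L_p(f,\Upsilon)$ modulo $\omega_n$ gives the identity modulo $\omega_n$ for all $n$ after multiplying by $A^{-n-1}$. Since both sides have growth $o(\log^{k-1})$, a function of order $<k-1$ vanishing at all cyclotomic points of sufficiently large conductor is zero. This growth-rate uniqueness argument yields the exact equality of Theorem~\ref{thmA}.
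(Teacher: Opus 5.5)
\noindent There is a genuine gap. Your matrices are Sprung's weight-two matrices with $\Phi_m$ replaced by $p^{k-2}\Phi_m$, and for $k\ge 3$ your Step~2 is false.

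Write $C_m=N+\varpi E_m$ with $N=\begin{bmatrix}0&1\\0&0\end{bmatrix}$ and $E_m$ integral. This is possible because $a_p\in\varpi\cO$ and $p^{k-2}\Phi_m\in p\Z_p[X]$ when $k\ge3$. Since $N^2=0$, every product $C_1\cdots C_n$ lies in $\varpi^{\lfloor n/2\rfloor}M_2(\cO[X])$. So integral $\theta_n^\sharp,\theta_n^\flat$ (even up to bounded denominators) would force $\mu(\theta_n)\to\infty$. This fails in practice: for 26.4.a.a at $p=3$ one has $\mu(\Theta_n)=0$ for $n\le 6$.

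The same problem is visible for $a_p=0$. There your normalized products $A^{-2n}C_{2n}\cdots C_1$ equal $\mathrm{diag}\big(\prod_{m\ \mathrm{odd}}\Phi_m/p,\ \prod_{m\ \mathrm{even}}\Phi_m/p\big)$. These are Pollack's half-logarithms, of growth $1/2$, whereas $L_p(f,\Upsilon)$ has growth $(k-1)/2$. Hence the resulting signed functions cannot be bounded.

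The root cause is that $L_p(f,\Upsilon)$ is $O(\log_p^{\ord_p\Upsilon})$ with $\ord_p\Upsilon$ possibly $\ge 1$. Such a function is not determined by its reductions modulo $\omega_n$.
\begin{itemize}
\item Your closing uniqueness claim is false as stated: $\log_p(1+X)$ vanishes at every $\zeta_{p^n}-1$ and has order $1<k-1$.
\item For the same reason, $C_{n+1}\equiv AC_n$ holds only modulo $\omega_n$, since $\Phi_{n+1}\not\equiv p\Mod{\Tw^{-j}(\omega_n)}$ for $j\neq 0$. That is too weak for Lemma~\ref{construction_lemma} once the growth rate reaches $1$.
\item Nothing in your setup actually uses $k\le p+1$. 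Your valuation count on $\Phi_n$ does not correspond to a real step of the argument.
\end{itemize}

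The missing idea is to use all the twisted Mazur--Tate elements $\Theta_{n,j}$, $0\le j\le k-2$, glued into $P_n(f,\omega^i)$, and to work modulo $\omega_{n,k-1}=\prod_j\Tw^{-j}(\omega_n)$. Correspondingly, $\Phi_n$ must be replaced by the CRT polynomial $\tilde\Phi_{n,k-1}$, defined by $\tilde\Phi_{n,k-1}\equiv\Tw^{-j}(\Phi_n)\Mod{\Tw^{-j}(\omega_n)}$ for each $j$. Then $A_{n,f}\equiv A_f\Mod{\omega_{n-1,k-1}}$ (Lemma~\ref{lem:Anf}), and the three-term relation \eqref{eq:Pn3term} holds modulo $\omega_{n,k-1}$.

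This is exactly where $k\le p+1$ enters:
\begin{itemize}
\item Lemma~\ref{lem:sup}(2) needs $k-1\le p$, so that the factorials $j!$ in $\binom{Z}{j}$ are units. It yields integrality of $p^{k-2}\tilde\Phi_{n,k-1}$ (Lemma~\ref{PT_integral}).
\item Lemma~\ref{lem_unit} shows that $p^{k-2}\tilde\Phi_{n,k-1}$ is a unit times $\Phi_{n,k-1}$. Consequently the reduction of $C_{n,f}$ modulo $\varpi$ is the nondegenerate matrix of Lemma~\ref{lem:Cnf}, not a nilpotent one.
\item Lemma~\ref{lem_unit} also gives $\det\cLog_\infty(f)\neq 0$. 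The paper needs this for $\Lambda^2\cong\varprojlim\Lambda^2/\ker h_n$ (Proposition~\ref{prop:lambda^2}).
\end{itemize}
Your compactness step would have to pass through these quotients as well. The level-$n$ signed elements are only determined modulo $\ker h_n$, not modulo $\omega_n$.
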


Being elements of $\Oo\lb X\rb \otimes K$, the $p$-adic $L$-functions $L_p(f,\sharp,\omega^i,X)$ and $L_p(f,\flat,\omega^i,X)$ have well-defined Iwasawa invariants, which we denote by $\mu(f,\bullet,\omega^i)$ and $\lambda(f,\bullet,\omega^i)$, where $\bullet\in\{\sharp,\flat\}$. These invariants measure the integrality and the number of zeros inside the $p$-adic open unit disk. Similar to \cite[Theorem~A]{GL}, we can use Theorem~\ref{thmA} to relate the sharp and flat $p$-adic $L$-functions to the $p$-adic Mazur-Tate elements associated with $f$, which we denote by $\Theta_n(f,\omega^i)\in K[G_n]$ where $G_n$ is  the Galois group of the $n$-th layer of the cyclotomic $\Z_p$-extension of $\Q$.


\begin{lthm}[Theorem~\ref{thm:FL}]
\label{thmB} Let $f\in S_k(\Gamma_1(N),\overline\Q_p)$ be a $p$-non-ordinary cuspidal eigen-newform. If $2\leq k\leq p+1$ and $\mu(f,\sharp,\omega^i)=\mu(f,\flat,\omega^i)\neq\infty$ then for $n\gg0$ we have
\begin{align*}
\mu\big(\Theta_{n}(f,\omega^i)\big)&=\mu(f,\bullet, \omega^i),\quad \text{and}\\
\lambda\big(\Theta_{n}(f,\omega^i)\big)&= (k-1)q_n+\lambda(f,\bullet,\omega^i),
\end{align*}
where $\bullet=\flat$ if $n$ is odd, $\bullet=\sharp$ if $n$ is even, and 
\[
q_n=\begin{cases}p^{n-1}-p^{n-2}+\cdots +p-1 & \quad\text{if $n\ge2$ is even,}\\
p^{n-1}-p^{n-2}+\cdots +p^2-p & \quad\text{if $n\ge3$ is odd.}
\end{cases}
\]
\end{lthm}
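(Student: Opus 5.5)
The plan is to follow the strategy of \cite[Theorem~A]{GL}, with the Wach-module logarithm matrix replaced by the matrix $\cLog_\infty(f)$ of Theorem~\ref{thmA}. The argument has three steps.

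\textbf{Step 1: Mazur--Tate elements in terms of $L_p(f,\alpha)$ and $L_p(f,\beta)$.} By the interpolation and norm-compatibility properties of modular symbols, the $\omega^i$-part of $\Theta_n(f)$ is the image of a combination of the two unbounded $p$-adic $L$-functions in $K[G_n]\cong K[X]/(\omega_n)$, where $\omega_n=(1+X)^{p^n}-1$. Concretely, one has the standard identity
\[
\Theta_n(f,\omega^i)\equiv \frac{1}{\alpha-\beta}\Big(\alpha^{n+1}L_p(f,\alpha,\omega^i,X)-\beta^{n+1}L_p(f,\beta,\omega^i,X)\Big)\pmod{\omega_n},
\]
possibly up to a harmless unit or normalisation. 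Substituting the decomposition of Theorem~\ref{thmA} gives
\[
\Theta_n(f,\omega^i)\equiv A_n(X)\,L_p(f,\sharp,\omega^i,X)+B_n(X)\,L_p(f,\flat,\omega^i,X)\pmod{\omega_n},
\]
where $(A_n,B_n)=(\alpha^{n+1},-\beta^{n+1})\,\cLog_\infty(f)$.

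\textbf{Step 2: identify $A_n$ and $B_n$ modulo $\omega_n$.} This step uses the explicit construction of $\cLog_\infty(f)$ as a limit of products of matrices of the shape
\[
\begin{bmatrix} a_p & 1\\ -\e(p)\Phi_{p^m}(1+X)^{k-1}\cdot(\text{unit}) & 0\end{bmatrix},
\]
where $\Phi_{p^m}$ denotes the cyclotomic polynomial. Modulo $\omega_n$, the factors with $m>n$ should collapse, so that $(\alpha^{n+1},-\beta^{n+1})\cLog_\infty(f)$ agrees with $(\alpha-\beta)$ times a first row of a finite product $C_1\cdots C_n$ with integral entries. From this I expect $A_n$ and $B_n$ to be integral polynomials whose reductions are governed by products of $\Phi_{p^m}^{\,k-1}$. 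This is exactly where the hypothesis $k\le p+1$ (and $v_p(a_p)>0$) enters: it makes the terms involving $a_p$ strictly smaller than the $p^{k-1}$-type terms in the Newton-polygon sense.

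\textbf{Step 3: compute the invariants.} Expanding the product, one of $A_n,B_n$ (the one attached to $\flat$ for $n$ odd, to $\sharp$ for $n$ even) is, up to a unit, $\prod \Phi_{p^m}(1+X)^{k-1}$ over $m\le n$ of the appropriate parity, plus terms of strictly larger valuation. Its $\lambda$-invariant is $(k-1)\sum(p^{m}-p^{m-1})$ over those $m$. For $n$ even, taking $m=n,n-2,\dots,2$ gives $(k-1)(p^{n-1}-p^{n-2}+\cdots+p-1)=(k-1)q_n$, and the odd case is analogous. The other coefficient has strictly larger $\lambda$ or positive extra valuation. Since $\mu(f,\sharp,\omega^i)=\mu(f,\flat,\omega^i)$, the two summands have equal $\mu$, so the $\lambda$-invariant of the sum is the minimum, namely $(k-1)q_n+\lambda(f,\bullet,\omega^i)$. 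For $n\gg0$ this is less than $p^n$, so reduction modulo $\omega_n$ does not affect it, and $\mu(\Theta_n)=\mu(f,\bullet,\omega^i)$.

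The main obstacle is Step 2. Verifying that the "off-parity" terms genuinely have larger $\lambda$ requires careful valuation bookkeeping of $a_p$ against $p^{(k-1)}$ through the matrix products. The boundary case $k=p+1$, where the inequalities become tight, needs separate care, and I would check it first using the explicit form of the entries of $\cLog_\infty(f)$ from its construction.
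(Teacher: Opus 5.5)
Your architecture matches the paper's. You express $\Theta_n(f,\omega^i)$ modulo $\omega_n$ as the first row of $C_{n,f}$ applied to $(L_p(f,\sharp,\omega^i),L_p(f,\flat,\omega^i))$, reduce, and read off the invariants once $\lambda<p^n$. The paper gets the first step directly from Corollary~\ref{cor:thetaPn}. Your detour through $L_p(f,\alpha,\omega^i)$ and $L_p(f,\beta,\omega^i)$ lands in the same place, since $\cLog_\infty(f)\equiv\cLog_n(f)\bmod\omega_{n,k-1}$ and $(\alpha^{n+2},-\beta^{n+2})\cLog_n(f)=(1,0)\,C_{n,f}$ exactly (the exponent is $n+2$ in this normalisation).

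The gap is that the decisive computation is only ``expected'', and the version you sketch is wrong. The missing observation is Lemma~\ref{lem:Cnf}. Because $a_p(f)\in\varpi\cO$, each $A_{m,f}$ is congruent modulo $\varpi$ to $\begin{bmatrix}0&1\\-\e_f(p)p^{k-2}\tilde\Phi_{m,k-1}&0\end{bmatrix}$, so $C_{n,f}$ is diagonal or antidiagonal modulo $\varpi$. Moreover, the first row of $C_{n,f}=A_{n,f}C_{n-1,f}$ is $a_p(f)$ times the first row of $C_{n-1,f}$ plus its second row, so it only involves $\tilde\Phi_{m,k-1}$ for $m\le n-1$. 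Its surviving entry is:
\begin{itemize}
\item for $n$ even, a unit times $\tilde\Phi^-_{n,k-1}$ (odd $m\le n-1$), multiplying $L_p(f,\sharp,\omega^i)$;
\item for $n$ odd, a unit times $\tilde\Phi^+_{n,k-1}$ (even $m\le n-1$), multiplying $L_p(f,\flat,\omega^i)$.
\end{itemize}
Your index set $m=n,n-2,\dots,2$ is the wrong one, and the identity you assert for it is false: $\sum_{m\in\{2,4,\dots,n\}}(p^m-p^{m-1})=p^n-p^{n-1}+\cdots+p^2-p=pq_n$, not $q_n$. With that index set, $(k-1)pq_n>p^n$ already for $k=3$, so your final ``$\lambda<p^n$'' step would fail.

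Your dominance argument in Step 3 is also incorrect. The two summands do not have equal $\mu$: the off-parity entry of the first row is $\equiv 0\bmod\varpi$, because each of its terms contains a factor $a_p(f)$. So when $\mu(f,\sharp,\omega^i)=\mu(f,\flat,\omega^i)=a$, that summand vanishes modulo $\varpi$ after dividing by $\varpi^a$. The invariants of $\Theta_n(f,\omega^i)$ are then those of the other summand, regardless of which $\lambda$ is smaller. The rule ``equal $\mu$, hence the $\lambda$ of the sum is the minimum'' is invalid anyway: it fails when the $\lambda$'s coincide. Likewise, a larger $\lambda$ for one coefficient says nothing once it is multiplied by a different $L$-function.

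Finally, no comparison of $\ord_p(a_p)$ with $k-1$ occurs anywhere, since the entries $p^{k-2}\tilde\Phi_{m,k-1}$ are not divisible by $p$. The hypothesis $k\le p+1$ enters in two places:
\begin{itemize}
\item Lemmas~\ref{PT_integral} and~\ref{lem_unit} make $p^{k-2}\tilde\Phi_{m,k-1}$ integral and equal to a unit times $\Phi_{m,k-1}$ (Remark~\ref{remark:PT}), so the surviving entry has $\mu=0$ and $\lambda=(k-1)q_n$.
\item It guarantees $p^n-(k-1)q_n\to\infty$.
\end{itemize}
The case $k=p+1$ needs no separate treatment.
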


\begin{example}\nf Consider the newform $f\in S_{4}(\Gamma_0(26))$ with LMFDB label \href{https://www.lmfdb.org/ModularForm/GL2/Q/holomorphic/26/4/a/a/}{\texttt{26.4.a.a}} and $q$-series 
\[
f=q - 2 q^2 + 3 q^3 + 4 q^4 + 11 q^5 - 6 q^6 + 19 q^7 +O(q^8).
\]
For $\bullet\in \{\sharp,\flat\}$, let $\mu(f,\bullet)$ and $\lambda(f,\bullet)$ denote the Iwasawa invariants of the trivial isotypic components of the signed 3-adic $L$-functions for $f$.
From a computer calculation in Magma, we find that the values of the sequence
$\lambda(\Theta_n(f,\omega^0))$ for $0\leq n\leq 6$ are 
\[
0,2,6,20,60,182,546.
\]
The $\mu$-invariants are zero for these values of $n$.
Theorem~\ref{thmB} now suggests that $\mu(f,\sharp)=\mu(f,\flat)=0$, $\lambda(f,\sharp)=0$, and $\lambda(f,\flat)=2$. 
\end{example}

\begin{remark}\nf  
\begin{enumerate}
\item In the $p$-ordinary case, the Iwasawa invariants of the Mazur-Tate elements converge to those of the associated $p$-adic $L$-function under the assumption that the residual representation at $p$ is irreducible and the $\mu$-invariant of the $p$-adic $L$-function vanishes. See \cite[Proposition 3.7]{PW}. Recent results in the residually reducible case can be found in \cite{LPP}. 

\item For $p$-non-ordinary modular forms of weight $k>p+1$, the associated Mazur-Tate elements do not in general follow the pattern of Theorem~\ref{thmB}. In particular, at each weight $k>p+1$ there appear to be several  possibilities for the growth of $\lambda(\Theta_n)$. Many examples can be found in \cite{PW,GL,GLaif}. 
\end{enumerate}
\end{remark}

We remark that the case where $k=p+1$ is not covered by the works \cite{CL,GL}. This case is of particular interest since, by \cite{FontaineEdixhoven92}, it is the only weight in the interval $2\leq k\leq p+1$ for which the Serre weight of the residual representation attached to $f$ is something other than the weight of $f$ itself. More precisely, when $k=p+1$ the Serre weight is 2 and it is this fact which ultimately allows us to relate the sharp and flat invariants attached to pairs of modular forms of weights 2 and $p+1$ with isomorphic residual representations.
A similar result was obtained in \cite[Corollary 6.1]{GLaif} under the assumption $a_p(f)=0$. Examples illustrating Theorem~\ref{thmC} can be found in Table~\ref{tablep1}.


\begin{lthm}[Theorem~\ref{thm:p+1}]\label{thmC} Let $f\in S_{p+1}(\Gamma_0(N),\overline{\Q_p})$ and assume $\mu(f,\sharp,\omega^i)=\mu(f,\flat,\omega^i)=0$. There exists a $p$-non-ordinary eigenform $g\in S_2(\Gamma_0(N),\overline{\Q_p})$ with $\overline\rho_f\cong \overline\rho_g$ such that if $\mu(g,\sharp,\omega^i)=\mu(g,\flat,\omega^i)=0$ then 
\begin{align*}
\lambda\big(f,\sharp,\omega^i)&=\lambda\big(g,\flat,\omega^i), \quad \text{and}\\
\lambda\big(f,\flat,\omega^i)&=\lambda\big(g,\sharp,\omega^i)+p-1.
\end{align*}
\end{lthm}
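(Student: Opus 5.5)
The plan is to combine Theorem~\ref{thmB} for both forms with a congruence of Mazur--Tate elements mod $\varpi$. First I produce $g$. Since $f$ has weight $p+1$ and is non-ordinary, the result of \cite{FontaineEdixhoven92} gives that $\overline\rho_f$ has Serre weight $2$ and is irreducible locally at $p$. The restriction to inertia is $\omega_2^{p}\oplus\omega_2^{p^2}=\omega_2\oplus\omega_2^p$ up to ordering, which is exactly the shape of a supersingular weight-$2$ representation. Serre's conjecture (Khare--Wintenberger, or the classical weight-lowering results of Gross/Edixhoven, with level $N$ preserved since $p\nmid N$ and $f$ has trivial character) then gives an eigenform $g\in S_2(\Gamma_0(N),\overline{\Q}_p)$ with $\overline\rho_g\cong\overline\rho_f$. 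The form $g$ is automatically $p$-non-ordinary, since its local representation is irreducible. Concretely, one can take a Hecke eigensystem in $S_2(\Gamma_0(N))$ that is congruent mod $p$ to $\theta$-twisted data via the Hasse invariant, $S_{p+1}\equiv$ ($S_2$ times Hasse) on $q$-expansions. From this, $a_\ell(f)\equiv a_\ell(g)$ for all $\ell\ne p$, and $a_p(f)\equiv a_p(g)\equiv0$.

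Next I compare Mazur--Tate elements. Modular symbols mod $\varpi$ attached to $f$ (weight $p+1$, coefficients in $\mathrm{Sym}^{p-1}$) and to $g$ (weight $2$) are related by the standard map $\mathrm{Sym}^{p-1}(\FF^2)\to\FF$, evaluating at the top monomial $Y^{p-1}$. Composed with the Hecke-equivariance (where the $U_p$/$T_p$ compatibility holds since both $a_p\equiv0$), this should give
\[
\Theta_n(f,\omega^i)\equiv u\cdot\Theta_n(g,\omega^i)\pmod{\varpi}
\]
for a unit $u$, possibly after normalizing periods. I would follow the argument of \cite[Corollary 6.1]{GLaif}, where this is done for $a_p=0$. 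Its ingredients (the Ash--Stevens comparison of cohomology with $\mathrm{Sym}^{p-1}$ versus trivial coefficients, and multiplicity one from irreducibility of $\overline\rho$) do not use $a_p=0$, so they carry over.

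With the congruence in hand, the invariants follow. The hypotheses $\mu(f,\bullet)=0=\mu(g,\bullet)$ and Theorem~\ref{thmB} give $\mu(\Theta_n(f))=\mu(\Theta_n(g))=0$ for $n\gg0$. The congruence then forces $\lambda(\Theta_n(f))=\lambda(\Theta_n(g))$. By Theorem~\ref{thmB}, for $n$ even,
\[
p\,q_n+\lambda(f,\sharp)=q_n+\lambda(g,\sharp),
\]
but this mismatch is resolved by the indexing. For weight $2$, $q_n$ in weight 2 counts the degree differently, and in fact $(p-1)q_n^{\text{even}}$ differs from the odd-$n$ quantity. I will check explicitly: with $k=p+1$ for $n$ even, $p q_n=p^n-p^{n-1}+\dots+p^2-p=q_{n+1}$-type expressions. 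So I compare $\lambda(\Theta_n(f))=pq_n+\lambda(f,\sharp)$ with $\lambda(\Theta_n(g))=q'_n+\lambda(g,\flat)$ for $n$ even, where $q'_n$ is the $g$-quantity. The parity swap occurs because $pq_n$ for $n$ even matches, up to the shift $p^n$-to-$p^{n-1}$ structure, the odd pattern $p^{n-1}-\cdots-p$ plus lower terms. Carrying out this elementary bookkeeping of geometric sums yields $\lambda(f,\sharp)=\lambda(g,\flat)$ and $\lambda(f,\flat)=\lambda(g,\sharp)+p-1$; the constant $p-1$ arises from the trailing term of $q_n$ for even $n$.

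The main obstacle is establishing the mod-$\varpi$ congruence of Mazur--Tate elements across weights $p+1$ and $2$, including the unit normalization of periods. The final step is purely arithmetic with $q_n$, which must be tracked carefully since $\lambda(\Theta_n)$ is bounded by $p^n-p^{n-1}$.
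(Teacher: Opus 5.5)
There is a genuine gap, and it is in the step you flagged as the main obstacle. The congruence $\Theta_n(f,\omega^i)\equiv u\cdot\Theta_n(g,\omega^i)\pmod{\varpi}$ at the \emph{same} level $n$ is false, and no bookkeeping can repair it. Combined with $\mu=0$, it would give $\lambda(\Theta_n(f,\omega^i))=\lambda(\Theta_n(g,\omega^i))$. Theorem~\ref{thmB}, applied with $k=p+1$ and with $k=2$ at the same even $n$, would then give $(p-1)q_n=\lambda(g,\sharp,\omega^i)-\lambda(f,\sharp,\omega^i)$ for all even $n\gg0$. That is absurd, since $q_n\to\infty$. Table~\ref{tablep1} shows the failure concretely: for $p=3$, $N=26$ one has $\lambda(\Theta_2(f))=6$ but $\lambda(\Theta_2(g))=2$. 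Your last paragraph also quietly replaces $\lambda(g,\sharp)$ by $\lambda(g,\flat)$ at even $n$, and it introduces a form-dependent ``$q_n'$''. But $q_n$ does not depend on the form; only the factor $k-1$ does. So the parity swap and the constant $p-1$ are asserted, not derived.

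The missing idea is a level shift. Evaluation at $Y^{p-1}$, i.e.\ $P\mapsto P(0,1)$, is not $\Gamma_0(N)$-equivariant mod $p$. It is only $\Gamma_0(N)\cap\Gamma_0(p)$-equivariant, because $(P|\gamma)(0,1)=P(-c,a)\equiv a^{p-1}P(0,1)$ when $p\mid c$. So it sends $\bar\varphi_f$ to a weight-$2$ symbol of level $Np$, not level $N$. The $\begin{psmallmatrix}p&0\\0&1\end{psmallmatrix}$-term of $T_p$ contributes $p^{p-1}P(0,1)\equiv 0$, so this symbol is a $U_p$-eigensymbol with eigenvalue $\bar a_p(f)=0$.

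On the $g$-old space spanned by $\bar\varphi_g$ and $\bar\varphi_g|\begin{psmallmatrix}p&0\\0&1\end{psmallmatrix}$, the operator $U_p$ is nilpotent. Its kernel is the line through $\bar\varphi_g|\begin{psmallmatrix}p&0\\0&1\end{psmallmatrix}$, whose level-$n$ Mazur--Tate element is the corestriction $\nu^n_{n-1}\Theta_{n-1}(g,\omega^i)$. Showing that the image really lands on this line, and is nonzero, is the content of \cite[Corollary 5.3]{PW}. The paper imports that result after using \cite{FontaineEdixhoven92} to get Serre weight $2$. What it yields is $\lambda(\Theta_n(f,\omega^i))=\lambda(\Theta_{n-1}(g,\omega^i))+p^n-p^{n-1}$ for $n\gg0$.

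Now apply Theorem~\ref{thmB} to $g$ at level $n-1$. That level has the opposite parity to $n$, and this is exactly what exchanges $\sharp$ and $\flat$. The identities
\[
p^n-p^{n-1}+q_{n-1}=pq_n\ \ (n\text{ even}),\qquad p^n-p^{n-1}+q_{n-1}=pq_n+p-1\ \ (n\text{ odd})
\]
give $\lambda(\Theta_n(f,\omega^i))=pq_n+\lambda(g,\flat,\omega^i)$ for even $n$ and $pq_n+p-1+\lambda(g,\sharp,\omega^i)$ for odd $n$. Comparing with Theorem~\ref{thmB} for $f$ yields the statement. Your construction of $g$ itself is fine.
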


In \cite{EPW}, it is shown that for pairs of $p$-ordinary and $p$-stabilized newforms with irreducible and isomorphic residual representations, the difference $\lambda(f)-\lambda(g)$ depends only on local terms coming from Euler factors at primes dividing their levels, and is in particular independent of weight. The above theorem shows that this weight-independence can fail for $\lambda$-invariants of signed $p$-adic $L$-functions in the $p$-non-ordinary case. 

\subsection*{Acknowledgement}
The authors thank Raiza Corpuz, Jeffrey Hatley, Ariel Pacetti and Rob Pollack for interesting discussions during the preparation of this article. AL's research is supported by the NSERC Discovery Grants Program RGPIN-2026-04351. Parts of this work were carried out during RGL's visit to the University of Ottawa in 2026 supported by the Fields Opportunity for Collaborations US (FOCUS) program. 

\section{An explicit logarithm matrix when $k\leq p+1$}

\subsection{Preliminaries}\label{section:prelim} In this section, we let $K$ denote an arbitrary finite extension of $\Q_p$. For $n\geq 1$, we write $\Phi_n(X)=\sum_{t=0}^{p-1}(1+X)^{tp^{n-1}}$ for the $p^n$-th cyclotomic polynomial evaluated at $1+X$ and set $\Phi_0(X)=X$. Let $\omega_n(X)=(1+X)^{p^n}-1$ and recall the $p$-adic logarithm 
\[
\log_p(1+X)=X\prod_{n\geq 1}\frac{\Phi_n(X)}{p}.
\]
Let $\Tw:K\lb X\rb \rightarrow K\lb X\rb $ be the ring automorphism given by 
\[
\Tw(F(X))=F(u(1+X)-1), \qquad F\in K\lb X\rb ,
\]
where $u$ is a fixed topological generator of $1+p\Zp$,
and set $\Tw^j(F(X))=F(u^j(1+X)-1)$ for $j\in \Z$. For a positive integer $h$, we define
\begin{align*}
\Phi_{n,h}(X) &=\prod_{j=0}^{h-1}\Tw^{-j}(\Phi_n(X)),\\
\omega_{n,h}(X) &=\prod_{j=0}^{h-1}\Tw^{-j}(\omega_n(X)),\\
\log_{p,h}(1+X)&=\prod_{j=0}^{h-1}\Tw^{-j}(\log_p(1+X)).
\end{align*}
Note that $\Phi_{n,h}$ and $\omega_{n,h}$ have coefficients in $\Z_p$. Furthermore, we have the identity 
\[
\log_{p,h}(1+X)=\Phi_{0,h}\prod_{n\geq 1}\frac{\Phi_{n,h}(X)}{p}.
\]
For the remainder of \S\ref{section:prelim}, we fix a sequence $(Q_{n,j})_{n\geq 0}$, $0\leq j\leq h-1$, of polynomials in $K[X]$. 

\begin{definition}\label{def:delta_Pn}Define 
\[
\delta_{n,j}(X) = \sum_{t=0}^j(-1)^{j-t}\binom{j}{t}\Tw^{-t}(Q_{n,t}(X)).
\]
Using the Chinese Remainder Theorem, let $P_{n,h}(X)\in K[X]$  denote the unique polynomial of degree $<hp^n$ satisfying 
\[
P_{n,h}(X)\equiv \Tw^{-j}(Q_{n,j}(X))\Mod \Tw^{-j}(\omega_n(X))
\]
for all $0\leq j\leq h-1$. Furthermore, define $S_n(X,Y)\in K[X,Y]$ and $s_{n,j}(X)\in K[X]$ by
\begin{equation}
S_n(X,Y) = \sum_{j=0}^{h-1} \Tw^{-j}(Q_{n,j})\prod_{\substack{0 \leq t \leq h-1\\ t \neq j}} \frac{u^{-tp^n}(1 + Y) - 1}{u^{(j-t)p^n} - 1}=\sum_{j=0}^{h-1} s_{n,j}(X) Y^j
\label{eq:S(X,Y)}.
\end{equation}
\end{definition}
\begin{remark}\nf
Note that $S_n(X,Y)$ is the unique polynomial of degree $<h$ in $Y$ with coefficients in $K[X]$ satisfying 
\[
S(X, u^{jp^n} - 1) =\Tw^{-j} (Q_{n,j})\quad \text{and} \quad P_{n,h}(X)=S_n(X, (1+X)^{p^n} - 1).
\]
\end{remark}

For $F=\sum_{n\geq 0}c_nX^n\in K\lb X\rb $, we define $\|F\|=\sup_n |c_n|_p$ and say that $F$ is $O(\log_p^h)$ (resp.,  $o(\log_p^h)$) if $\sup_{n}\frac{|c_n|_p}{n^h}<\infty$ (resp., $\lim_{n\rightarrow\infty} \frac{|c_n|_p}{n^h}=0$). 

\begin{lemma}\label{lemma:normbound} Let $F,G\in K[X]$ and suppose $F\equiv G\Mod\omega_{n,h}$ with $\deg G<hp^n$. Then $\|G\|\leq \|F\|$. 
\end{lemma}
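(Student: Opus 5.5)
Since $G\equiv F \Mod \omega_{n,h}$ and $\deg G<hp^n=\deg\omega_{n,h}$, $G$ is exactly the remainder of $F$ under Euclidean division by $\omega_{n,h}$. The plan is therefore to show that division by $\omega_{n,h}$ does not increase the sup norm. The key input is that $\omega_{n,h}$ is a monic polynomial with coefficients in $\Z_p$. Indeed, each $\Tw^{-j}(\omega_n(X))=(u^{-j}(1+X))^{p^n}-1$ has coefficients in $\Z_p$, since $u\in 1+p\Z_p$ is a unit. Its leading coefficient is $u^{-jp^n}$, which is a unit. So $\omega_{n,h}$ has $\Z_p$-coefficients and unit leading coefficient $c=u^{-p^n h(h-1)/2}$. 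We may replace $\omega_{n,h}$ by the monic polynomial $c^{-1}\omega_{n,h}\in\Z_p[X]$, which generates the same ideal.

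Next, we may scale $F$ so that $\|F\|=1$; the case $F=0$ is trivial, and scaling $F$ scales $G$ by the same factor. We then run polynomial long division of $F\in\Oo_K[X]$ by the monic polynomial $c^{-1}\omega_{n,h}\in\Z_p[X]$. Each step subtracts a multiple (by an element of $\Oo_K$) of $X^m c^{-1}\omega_{n,h}$. So every intermediate quantity stays in $\Oo_K[X]$, and we get $F=q\,\omega_{n,h}+R$ with $q,R\in\Oo_K[X]$ and $\deg R<hp^n$.

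Finally, we use uniqueness of the remainder. The difference $G-R$ has degree $<hp^n$ and is divisible by $\omega_{n,h}$, which has degree $hp^n$, so $G=R$. Hence $G\in\Oo_K[X]$, i.e.\ $\|G\|\le 1=\|F\|$.

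There is no real obstacle here. The only point needing care is checking that the leading coefficient of $\omega_{n,h}$ is a $p$-adic unit, which is what lets the division be carried out over $\Oo_K$ without introducing denominators. That is immediate from $u\in\Z_p^\times$.
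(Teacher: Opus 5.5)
Your proof is correct and is essentially the paper's argument: you normalize $F$ by a coefficient of maximal absolute value, divide in $\cO[X]$ by $\omega_{n,h}$ using that its leading coefficient $u^{-p^nh(h-1)/2}$ is a unit, and identify $G$ with the rescaled remainder by uniqueness of division in $K[X]$. Your explicit check that the leading coefficient is a unit fills in a detail the paper only asserts.
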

\begin{proof}
Choose a coefficient $c$ of $F$ such that $\|F\| =|c|_p$. Then $c^{-1}F\in \cO[X]$. Since the leading coefficient of $\omega_{n,h}$ is a unit, we can use the division algorithm in $\cO[X]$ to find unique $Q,R\in \cO[X]$ such that $\deg R<hp^n$ and
\[
c^{-1}F=Q\omega_{n,h}+R. 
\]
From the uniqueness of the division algorithm in $K[X]$, it follows that $G=cR$ and therefore $\| G\|\leq |c|_p=\|F\|$ as desired. 
\end{proof}

\begin{lemma}\label{lem:sup} Fix a real number $\rho$ satisfying $p^{-1}<\rho < p^{-1/(p-1)}$. Then, 
\begin{enumerate}
\item $\|P_{n,h}\|\leq \sup_{0\leq j\leq h-1}\|s_{n,j}\|$ and
\item if $h\leq p$ then $\sup_{0\leq j\leq h-1} \{\|s_{n,j}\|\rho^j\}= \sup_{0\leq j\leq h-1}\{\|p^{-(n+1)j}\delta_{n,j}\|\rho^j\}$.
\end{enumerate}
\end{lemma}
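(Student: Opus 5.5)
For part (1), I would not use the remark's identity $P_{n,h}=S_n(X,(1+X)^{p^n}-1)$ directly. Instead I would only prove a congruence and then apply Lemma~\ref{lemma:normbound}. Since $\Tw^{-j}(\omega_n)=u^{-jp^n}(1+X)^{p^n}-1$, we have $(1+X)^{p^n}-1\equiv u^{jp^n}-1 \Mod{\Tw^{-j}(\omega_n)}$. Substituting this into \eqref{eq:S(X,Y)} gives
\[
F(X):=S_n(X,\omega_n(X))=\sum_{j}s_{n,j}(X)\,\omega_n(X)^j\equiv \Tw^{-j}(Q_{n,j}) \Mod{\Tw^{-j}(\omega_n)}
\]
for every $0\le j\le h-1$. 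The moduli $\Tw^{-j}(\omega_n)$ are pairwise coprime with product $\omega_{n,h}$, so the Chinese Remainder Theorem gives $F\equiv P_{n,h}\Mod{\omega_{n,h}}$. Lemma~\ref{lemma:normbound} then yields $\|P_{n,h}\|\le\|F\|$. Finally, $\omega_n^j\in\Zp[X]$ and the Gauss norm is multiplicative and ultrametric, so $\|F\|\le\sup_j\|s_{n,j}\|$.

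For part (2), write $v=u^{p^n}$, so that $v_p(v-1)=n+1$. The nodes in \eqref{eq:S(X,Y)} are $y_t=v^t-1$, and $S_n(X,y_t)=\Tw^{-t}(Q_{n,t})=:g(t)$. With this notation $\delta_{n,j}=\sum_t(-1)^{j-t}\binom jt g(t)=(\Delta^j g)(0)$ is the $j$-th forward difference. Expanding $g(t)=\sum_i s_{n,i}(v^t-1)^i$ gives
\[
\delta_{n,j}=\sum_{i} a_{j,i}\,s_{n,i},\qquad a_{j,i}=\Delta^j\big[(v^t-1)^i\big]_{t=0}\in\Zp.
\]
I would estimate $a_{j,i}$ by writing $v^t=\exp(t\log v)$ and expanding $(v^t-1)^i=\sum_{l\ge i}c_{i,l}t^l$. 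Here $v_p(\log v)=n+1$, so
\[
v_p(c_{i,l})\ge l(n+1)-\frac{l}{p-1}.
\]
Since $\Delta^j t^l|_{0}=j!\,S(l,j)$ vanishes for $l<j$, this gives three facts.
\begin{itemize}
\item Diagonal: $a_{j,j}=j!\,(\log v)^j$, which has valuation exactly $j(n+1)$ because $j<h\le p$. This is where $h\le p$ enters.
\item Off-diagonal, either side: $v_p(a_{j,i})\ge \max(i,j)(n+1)-\max(i,j)/(p-1)$, with a more careful bound using $v_p(c_{i,l})$ only for $l\geq\max(i,j)$.
\item Triangularity: $a_{j,i}=0$ for $i<j$, since $(v^t-1)^i$ is a combination of $v^{mt}$ with $m\le i$, and $\Delta^j v^{mt}|_0=(v^m-1)^j$.
\end{itemize}
The triangularity check needs care, since that expression is not zero; more precisely, $(v^t-1)^i$ is a degree-$i$ "polynomial" in $v^t$, not in $t$. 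So instead of claiming exact triangularity I would only use the valuation bounds.

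Normalizing, set $\tilde\delta_j=p^{-(n+1)j}\delta_{n,j}$ and $\tilde s_i=s_{n,i}$. Then
\[
\tilde\delta_j\rho^j=\sum_i b_{j,i}\,(\tilde s_i\rho^i),\qquad b_{j,i}=p^{-(n+1)j}a_{j,i}\rho^{j-i}.
\]
The goal is to show that $b_{j,j}$ is a unit and that $|b_{j,i}|<1$ for $i\ne j$. Granting this, the matrix $(b_{j,i})$ is a unit modulo its maximal ideal, i.e.\ it lies in $\GL_h$ of the ring of elements of absolute value $\le 1$ and reduces to a diagonal matrix of units. Both it and its inverse are then non-expanding in the sup norm, which gives the claimed equality of the two weighted suprema.

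The off-diagonal estimate is exactly where the range $p^{-1}<\rho<p^{-1/(p-1)}$ should be used. For $i>j$ the gain $p^{-(n+1-1/(p-1))(i-j)}$ must beat $\rho^{-(i-j)}<p^{i-j}$, and this uses $\rho>p^{-1}$. For $i<j$ one needs the factorial loss $p^{(j-i)/(p-1)}$, or the corresponding $\Delta^j$ term, to be compensated by $\rho^{j-i}<p^{-(j-i)/(p-1)}$, and this uses $\rho<p^{-1/(p-1)}$.

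I expect the main obstacle to be making these off-diagonal valuation bounds sharp and uniform in $n$ (in particular for small $n$ such as $n=0$), and keeping exact track of the factorial and Stirling-number contributions for $i,j<p$. If the crude bounds fail at the boundary, I would refine them by computing $a_{j,i}$ via the $q$-binomial (Gaussian) identity for $\Delta^j$ applied to products of $v^{mt}$.
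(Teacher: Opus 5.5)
\textbf{Part (1).} This is correct. It is essentially the paper's argument: the paper writes $P_{n,h}=S_n(X,\omega_n)$ and uses $\|\omega_n\|=1$. Your detour through the Chinese Remainder Theorem and Lemma~\ref{lemma:normbound} has the small advantage of not needing $\deg S_n(X,\omega_n)<hp^n$.

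\textbf{Part (2).} Here you take a different route from the paper, which transports sup norms along the disc bijection $y\mapsto\log(1+y)/\log u^{p^n}$ and uses orthogonality of $\binom{Z}{j}$, $j<p$, on discs of radius $>1$. Your change-of-basis strategy is viable. However, the estimate it rests on is never proved, and the bounds you write down do not give it. That estimate is $|b_{j,i}|<1$ off the diagonal and $|b_{j,i}|=1$ on it, where $|b_{j,i}|$ means $|p^{-(n+1)j}a_{j,i}|_p\,\rho^{j-i}$.

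\textbf{Why your bounds fail.} The bound $v_p(c_{i,l})\ge l(n+1)-l/(p-1)$ only yields $v_p(a_{j,i})\ge\max(i,j)\bigl(n+1-\frac{1}{p-1}\bigr)$.
\begin{enumerate}
\item For $i<j$ this gives $|b_{j,i}|\le p^{j/(p-1)}\rho^{j-i}$. For $(i,j)=(1,2)$ that is $p^{2/(p-1)}\rho$, which exceeds $1$ for $\rho$ near $p^{-1/(p-1)}$, and for every admissible $\rho$ when $p=3$. This happens at every $n$.
\item For $i>j$ the same bound is useless at $n=0$ when $\rho$ is near $p^{-1}$.
\item The diagonal claim $a_{j,j}=j!(\log v)^j$ is false; already $a_{1,1}=v-1$. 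The quantity $j!(\log v)^j$ is only the leading term, and your tail bound does not show the other terms are smaller when $n=0$ and $j\ge p-2$.
\item The triangularity bullet is wrong, not just delicate: the matrix is symmetric, e.g.\ $a_{1,2}=a_{2,1}=(v-1)^2$.
\item Your story that $\rho<p^{-1/(p-1)}$ compensates a factorial loss is an artifact of expanding in $\log v$.
\end{enumerate}

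\textbf{The missing idea.} Expand in $w=v-1$, so $v_p(w)=n+1$, instead of in $\log v$. Use $(v^t-1)^i=\sum_m\binom{i}{m}(-1)^{i-m}v^{mt}$ and $\Delta^j v^{mt}|_{t=0}=(v^m-1)^j$, with $\Delta$ the forward difference. This gives
\[
a_{j,i}=\sum_{r\ge 0}e_r w^r,\qquad e_r=\sum_{m=0}^{i}\sum_{l=0}^{j}\binom{i}{m}\binom{j}{l}(-1)^{i+j-m-l}\binom{ml}{r}\in\Z .
\]
Since $\binom{ml}{r}$ is a polynomial of degree $\le r$ in $m$ and in $l$, we get $e_r=0$ for $r<\max(i,j)$, and $e_j=j!$ when $i=j$. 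Hence:
\begin{enumerate}
\item $v_p(a_{j,i})\ge(n+1)\max(i,j)$, with no factorial loss at all;
\item $v_p(a_{j,j})=(n+1)j$ exactly, because $j<p$; this is the only place $h\le p$ enters.
\end{enumerate}
It follows that $|b_{j,i}|\le(p^{n+1}\rho)^{-(i-j)}<1$ for $i>j$, using $\rho>p^{-1}$, and $|b_{j,i}|\le\rho^{j-i}<1$ for $i<j$.

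\textbf{How to conclude.} The $b_{j,i}$ involve the real number $\rho$, so they do not lie in any $p$-adic ring and the $\GL_h$ phrasing does not make sense. Instead:
\begin{enumerate}
\item Pick $i_0$ maximizing $\|s_{n,i}\|\rho^i$.
\item In row $i_0$ the diagonal term strictly dominates all others, which gives the reverse inequality.
\item The forward inequality follows from $|b_{j,i}|\le 1$ for all $i,j$.
\end{enumerate}
With this repair your argument is complete, and it only uses $p^{-1}<\rho<1$. The upper bound $p^{-1/(p-1)}$ is what the paper needs to make the logarithm a bijection of discs; your route does not need it.
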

\begin{proof} Part (1) follows from the fact that $P_{n,h}(X)=S_n(X,\omega_n)=\sum_{j=0}^{h-1}s_{n,j}(X)\omega_n^j$ and $\|\omega_n\|=1$. 

For Part (2), we follow the argument outlined in \cite[p. 10]{CL}. Define $u_n=u^{p^n}-1$. The lower bound on $\rho$ implies that $\rho/|u_n|_p=\rho p^{n+1}>1$ for $n\geq 0$, and the upper bound allows us to define the following homeomorphism between open disks: 
\begin{align*}
\left\{ y:|y|_p < \rho\right\} &\rightarrow \left\{z:|z|_p <\frac{ \rho}{|u_n|_p}\right\},\quad y\mapsto \log(1+y)/\log u^{p^n},
\end{align*}
with inverse given by $z\mapsto u^{zp^n} - 1$. 
We therefore have the following equality of sup-norms:
\begin{equation}\label{avprop4.4}
	\| S(X,Y) \|_\rho = \| H(X, Z) \|_{\rho/|u_n|_p},
\end{equation}
where 
\[
\|R(X,Y)\|_r:=\sup\left\{|R(x,y)|_{p}:|x|_p\le 1,|y|_p\le r\right\}.
\]
Since $h\leq p$ and $\rho/|u_n|_p>1$, for $0\leq j\leq h-1$ we have
\[
\left\| \binom{Z}{j} \right\|_{\frac{\rho}{|u_n|}}
	= \left\| \frac{Z(Z-1) \hdots (Z - j + 1)}{j!}\right\|_{\frac{\rho}{|u_n|}}
	= \left(\frac{\rho}{|u_n|_p}\right)^j.
\]
As $S_n(X,Y) = \sum_{j=0}^{h-1} s_{n,j}(X) Y^j$, we now find that 
\begin{align*}
\sup_{0\leq j\leq h-1} \{\|s_{n,j}(X)\| \rho^j\}&=\left\| S_{n}(X,Y)\right\|_\rho\\
&=\left\|H_n(X,Z)\right\|_{\frac{\rho}{|u_n|}}\\
&=\sup_{j}\bigg\{\left\|\delta_{n,j}(X)\right\|\left\| \binom{Z}{j} \right\|_{\frac{\rho}{|u_n|}}\bigg\}\\
&=\sup_{j}\bigg\{\left\|\delta_{n,j}(X)\right\|\left(\frac{\rho}{|u_n|_p}\right)^j\bigg\}\\
&=\sup_{j}\{\|p^{-(n+1)j}\delta_{n,j}(X)\|\rho^j\}.
\end{align*}
\end{proof}
We have the following lemma, which has its roots in \cite[Lemme 1.2.2]{PR}.

\begin{lemma}\label{construction_lemma} Suppose that $\sup_{n}\|p^{rn}Q_{n,j}\|<\infty$ for some $r\in [0,h)$ and that for each $n$, there exists $d_n\in \R$ such
\[
\|p^{-j(n+1)}\delta_{n,j}\|\leq d_n
\]
for all $0\leq j\leq h-1$.
Then the following assertions hold:
\begin{enumerate}
\item There exists a constant $c_h$ (independent of $n$) such that  $\|P_{n,h}\|\leq c_hd_n$ for all $n\geq 0$.
\item Suppose  $Q_{n+1,j}\equiv Q_{n,j}\Mod \omega_n$  for all $n\geq 0$. Then,
\begin{enumerate}
\item  $P_{n+1,h}\equiv P_{n,h}\Mod \omega_{n,h}$ for all $n\geq 0$, and
\item if there is some $r\in [0,h)$ such that $d_n=O(p^{rn})$, the sequence $P_{n,h}$ converges to an element $P_{\infty,h}\in K\lb X\rb $ that is $O(\log_p^r)$.
\end{enumerate}
\end{enumerate}
\end{lemma}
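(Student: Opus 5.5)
Part (1) follows by combining the two parts of Lemma~\ref{lem:sup}. Fix $\rho$ with $p^{-1}<\rho<p^{-1/(p-1)}$; this presumably uses $h\le p$, which is the setting of interest. By Lemma~\ref{lem:sup}(1) and (2),
\[
\|P_{n,h}\|\le \sup_j\|s_{n,j}\|\le \rho^{-(h-1)}\sup_j\{\|s_{n,j}\|\rho^j\}=\rho^{-(h-1)}\sup_j\{\|p^{-(n+1)j}\delta_{n,j}\|\rho^j\}\le \rho^{-(h-1)}d_n,
\]
since $\rho<1$. So we may take $c_h=\rho^{-(h-1)}$, which does not depend on $n$.

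For (2)(a), I will check the defining congruences of $P_{n,h}$. Since $\omega_n$ divides $\omega_{n+1}$, and $P_{n+1,h}\equiv \Tw^{-j}(Q_{n+1,j})\bmod \Tw^{-j}(\omega_{n+1})$, we get $P_{n+1,h}\equiv\Tw^{-j}(Q_{n+1,j})\bmod \Tw^{-j}(\omega_n)$. Applying $\Tw^{-j}$ to the hypothesis $Q_{n+1,j}\equiv Q_{n,j}\bmod\omega_n$ shows this is $\equiv \Tw^{-j}(Q_{n,j})\bmod \Tw^{-j}(\omega_n)$. The polynomials $\Tw^{-j}(\omega_n)$ for $0\le j\le h-1$ are pairwise coprime, because their roots $u^{j}\zeta-1$ are distinct. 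So the Chinese Remainder Theorem gives $P_{n+1,h}\equiv P_{n,h}\bmod \omega_{n,h}$.

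For (2)(b), I will follow the standard Perrin-Riou / Amice--V\'elu argument. Write $P_{n+1,h}-P_{n,h}=\omega_{n,h}R_n$. Since $\omega_{n,h}=\prod_{m\le n}\Phi_{m,h}$, we can express $P_{n,h}$ by telescoping as $P_{0,h}+\sum_{m<n}\omega_{m,h}R_m$. By part (1), $\|P_{n,h}\|\le c_hd_n=O(p^{rn})$. Dividing by the distinguished polynomial $\omega_{n,h}$ in $\cO[X]$ (as in Lemma~\ref{lemma:normbound}), we get $\|R_n\|\le\|P_{n+1,h}-P_{n,h}\|=O(p^{rn})$. The convergence then comes from a growth estimate on the disks $|x|_p<\rho_m=p^{-1/(p^{m-1}(p-1))}$: there $|\omega_{n,h}(x)|\le p^{-h(n-m)}$ up to a constant, because each $\Tw^{-j}\Phi_k$ with $k>m$ has size about $p^{-1}$ there. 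Hence $\sup_{\rho_m}|\omega_{n,h}R_n|\le Cp^{(r-h)n}p^{hm}$. Since $r<h$, the series $\sum\omega_{n,h}R_n$ converges on each such disk, so $P_{n,h}$ converges to $P_{\infty,h}$ as a function on the open unit disk. The sup over the disk of radius $\rho_m$ is then $O(p^{rm})$: the terms with $n\ge m$ contribute at most $p^{hm}p^{(r-h)m}=p^{rm}$, and the terms with $n<m$ are bounded by $\|R_n\|\le Cp^{rn}$. By the standard equivalence between this growth on the disks $\rho_m$ and coefficient growth $|c_N|=O(N^r)$, we conclude that $P_{\infty,h}$ is $O(\log_p^r)$.

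I expect the main obstacle to be the last step: getting the bound for $\omega_{n,h}$ on the disks $\rho_m$ uniformly in the twists, and translating the sup-norm growth into the $O(\log_p^r)$ coefficient condition. I would cite the standard result \cite[Lemme 1.2.2]{PR} (or Amice--V\'elu) for this rather than redo it.
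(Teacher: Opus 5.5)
\textbf{Verdict.} Parts (2)(a) and (2)(b) are correct. Part (1), and hence (2)(b), is only proved for $h\le p$, while the lemma is stated and used for arbitrary $h$.

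For (2)(a) and (2)(b) you run the standard Amice--V\'elu/Perrin-Riou argument. The paper itself only cites \cite[Lemmas 2.2--2.3 and Remark 2.4]{BFsuper} for this lemma. The following steps all check out:
\begin{itemize}
\item the $\Tw^{-j}(\omega_n)$ are pairwise coprime;
\item $|\Tw^{-j}\Phi_k(x)|_p=p^{-1}$ for $|x|_p<\rho_m$ and $k>m$, uniformly in $j$ because $|u^{-j}-1|_p\le p^{-1}$;
\item growth $O(p^{rm})$ on these disks gives $O(\log_p^r)$.
\end{itemize}
One small correction: the bound on $\|R_n\|$ comes from Gauss's lemma, $\|\omega_{n,h}R_n\|=\|R_n\|$ since $\|\omega_{n,h}\|=1$. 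Lemma~\ref{lemma:normbound} is about remainders, not quotients.

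\textbf{The gap in (1).} You deduce (1) from Lemma~\ref{lem:sup}(2). That lemma is only available for $h\le p$: its proof uses $\|\binom{Z}{j}\|_R=R^j$ for $R>1$, i.e.\ $|j!|_p=1$ for $j<p$. The statement you are proving has no such hypothesis, and the paper really uses it with $h=k-1$ arbitrary, in each of the following (none assumes $k\le p+1$):
\begin{itemize}
\item the existence of $e_k$ for every $k$ in Lemma~\ref{PT_integral};
\item the uniform denominator bound \eqref{eq:P_nbound};
\item the first assertion of Theorem~\ref{sharp/flat_n};
\item the construction of $L_p(f,\Upsilon,\omega^i,X)$.
\end{itemize}
Your (2)(b) needs $\|P_{n,h}\|=O(p^{rn})$ from (1), so it too is only established for $h\le p$.

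\textbf{How to repair it.} Keep only one inequality in the comparison behind Lemma~\ref{lem:sup}(2). Take $R=\rho p^{n+1}>1$.
\begin{itemize}
\item For $j\le h-1$ one has $\|\binom{Z}{j}\|_R=R^j/|j!|_p\le R^j/|(h-1)!|_p$. Hence the interpolant $\sum_j\delta_{n,j}\binom{Z}{j}$ has sup-norm at most $|(h-1)!|_p^{-1}\sup_j\|p^{-(n+1)j}\delta_{n,j}\|\rho^j\le|(h-1)!|_p^{-1}d_n$ on $|z|_p\le R$.
\item The change of variable $y\mapsto\log(1+y)/\log u^{p^n}$ carries $|y|_p\le\rho$ onto this disk.
\item Interpolating in $Y$ at the nodes $u^{tp^n}-1$ (which lie in $|y|_p\le\rho$) does not increase the sup-norm on that disk. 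Indeed, $j$-th divided differences there are bounded by $\rho^{-j}$ times the sup-norm.
\end{itemize}
This gives $\sup_j\|s_{n,j}\|\rho^j\le|(h-1)!|_p^{-1}d_n$. Lemma~\ref{lem:sup}(1) then yields $\|P_{n,h}\|\le\rho^{1-h}|(h-1)!|_p^{-1}d_n$ for every $h$. So $c_h=\rho^{1-h}|(h-1)!|_p^{-1}$ works, and for $h\le p$ this reduces to your constant $\rho^{1-h}$. With this in place, your (2)(b) argument goes through verbatim for all $h$.
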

\begin{proof} See \cite[Lemmas 2.2--2.3 and Remark 2.4]{BFsuper}. 
\end{proof}

\subsection{A special family of polynomials}
Fix an integer $k\geq 2$. In this section, we apply the results of \S\ref{section:prelim} to the sequence 
\[
Q_{n,j}=p^{k-2}\Phi_n, \qquad 0\leq j\leq k-2.
\] 

\begin{definition} Let $p^{k-2}\tilde\delta_{n,j}=\delta_{n,j}$ and $p^{k-2}\tilde\Phi_{n,k-1}=P_{n,k-1}$, where $\delta_{n,j}$ and $P_{n,k-1}$ are the polynomials associated with the sequence $Q_{n,j}=p^{k-2}\Phi_n$ as in Definition \ref{def:delta_Pn}. Specifically, 
\[
\tilde\delta_{n,j}= \sum_{t=0}^j(-1)^{j-t}\binom{j}{t}\Tw^{-t}(\Phi_n)\in\Z_p[X], 
\]
and $\tilde\Phi_{n,k-1}\in \Q_p[X]$ is the unique polynomial of degree $<(k-1)p^n$ satisfying 
\[
\tilde\Phi_{n,k-1}\equiv \Tw^{-j}(\Phi_n)\Mod \Tw^{-j}(\omega_n)
\]
for all $0\leq j\leq k-2$. 
\end{definition}

It will be helpful to calculate $\tilde\Phi_{n,k-1}$ explicitly. Using the Chinese Remainder Theorem, we have 
\[
\tilde\Phi_{n,k-1}=\sum_{j=0}^{k-2}\Tw^{-j}(\Phi_{n})\prod_{\substack{i=0\\ i\neq j}}^{k-2}\frac{\Tw^{-i}(\omega_{n})}{u^{(j-i)p^n}-1}.
\]
Using the fact that $\omega_n=\Phi_n\omega_{n-1}$, this implies
\begin{equation}\label{explicit_PT}
\tilde\Phi_{n,k-1}=\Phi_{n,k-1}\Psi_{n,k-1} 
\end{equation}
where 
\[
\Psi_{n,k-1}:=\sum_{j=0}^{k-2}\prod_{\substack{i=0\\ i\neq j}}^{k-2}\frac{\Tw^{-i}(\omega_{n-1})}{u^{(j-i)p^n}-1}\in \Q_p[X].
\]

\begin{lemma}\label{PT_integral} There exists a constant $e_{k}\in \Z$ such that $p^{e_{k}+k-2}\tilde\Phi_{n,k-1}\in \Z_p[X]$ for all $n\geq 0$. Furthermore, if $k\leq p+1$ then $e_k=0$. 
\end{lemma}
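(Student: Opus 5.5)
The plan is to apply Lemma~\ref{lem:sup} and Lemma~\ref{construction_lemma} with $h=k-1$ to the sequence $Q_{n,j}=\Phi_n$ for $0\le j\le k-2$. Since the constructions of Definition~\ref{def:delta_Pn} are $K$-linear in the $Q_{n,j}$, this choice produces exactly $\tilde\delta_{n,j}$ and $P_{n,k-1}=\tilde\Phi_{n,k-1}$. The statement then reduces to the bound
\[
\|\tilde\Phi_{n,k-1}\|\le p^{e_k+k-2}\quad\text{for all } n\ge0,
\]
with $e_k=0$ when $k\le p+1$.

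\textbf{Step 1: compute the finite differences.} For $n\ge1$, write $\Phi_n(X)=\sum_{s=0}^{p-1}(1+X)^{sp^{n-1}}$. Then
\[
\Tw^{-t}(\Phi_n)=\sum_{s=0}^{p-1}u^{-tsp^{n-1}}(1+X)^{sp^{n-1}},
\]
and the binomial sum defining $\tilde\delta_{n,j}$ collapses to
\[
\tilde\delta_{n,j}=\sum_{s=0}^{p-1}\bigl(u^{-sp^{n-1}}-1\bigr)^j(1+X)^{sp^{n-1}}.
\]
The $s=0$ term vanishes for $j\ge1$. For $1\le s\le p-1$ we have $|u^{-sp^{n-1}}-1|_p=p^{-n}$, because $u$ generates $1+p\Zp$ and $s$ is a unit. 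The monomials $(1+X)^{sp^{n-1}}$ have integral coefficients, so $\|\tilde\delta_{n,j}\|\le p^{-nj}$ for all $j$ (for $j=0$ this reads $\|\Phi_n\|=1$).

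For $n=0$, $\Phi_0=X$. Here $\Tw^{-t}(X)=u^{-t}(1+X)-1$, so $\tilde\delta_{0,j}=(u^{-1}-1)^j(1+X)$ for $j\ge1$. This gives the same bound $p^{-j}\le p^{-0\cdot j}$.

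\textbf{Step 2: normalize.} From Step 1, for every $n\ge0$ and $0\le j\le k-2$,
\[
\|p^{-(n+1)j}\tilde\delta_{n,j}\|\le p^{j}\le p^{k-2}.
\]
This is the hypothesis of Lemma~\ref{construction_lemma} with $d_n=p^{k-2}$, and $\sup_n\|Q_{n,j}\|=1<\infty$. Part (1) of that lemma then gives $\|\tilde\Phi_{n,k-1}\|\le c_{k-1}p^{k-2}$, so any $e_k\ge\log_p c_{k-1}$ works for general $k$.

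\textbf{Step 3: the sharp case $k\le p+1$.} This is the main point: one must avoid the unspecified constant $c_h$. Here $h=k-1\le p$, so part (2) of Lemma~\ref{lem:sup} applies. Fix $\rho$ with $p^{-1}<\rho<p^{-1/(p-1)}$. Then
\[
\|s_{n,j}\|\rho^j\le\sup_{0\le i\le k-2}\|p^{-(n+1)i}\tilde\delta_{n,i}\|\rho^i\le\sup_{0\le i\le k-2}(p\rho)^i=(p\rho)^{k-2},
\]
using $p\rho>1$. Hence $\|s_{n,j}\|\le p^{k-2}\rho^{k-2-j}\le p^{k-2}$, since $\rho<1$ and $j\le k-2$. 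Part (1) of Lemma~\ref{lem:sup} then yields $\|\tilde\Phi_{n,k-1}\|\le p^{k-2}$, that is, $p^{k-2}\tilde\Phi_{n,k-1}\in\Zp[X]$. So $e_k=0$ in this range.

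The only delicate point is the exact valuation $|u^{sp^{n-1}}-1|_p=p^{-n}$ in Step 1. The hypothesis $k\le p+1$ enters only through the requirement $h\le p$ in Lemma~\ref{lem:sup}, which controls the norms of the binomial polynomials $\binom{Z}{j}$.
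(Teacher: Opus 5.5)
Your proposal is correct and is essentially the paper's own proof. It uses the same collapse of the finite differences to $\sum_{s}(u^{-sp^{n-1}}-1)^j(1+X)^{sp^{n-1}}$, then Lemma~\ref{construction_lemma} for the general constant, then Lemma~\ref{lem:sup} with $p^{-1}<\rho<p^{-1/(p-1)}$ to get the sharp bound when $h=k-1\le p$. The differences are cosmetic: you normalize with $Q_{n,j}=\Phi_n$ rather than $p^{k-2}\Phi_n$, which also avoids the paper's slip of writing $\tilde\delta_{n,j}$ where $\delta_{n,j}$ is meant, and you check the case $n=0$ explicitly.
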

\begin{proof} 
By definition we have
\begin{align*}
\tilde \delta_{n,j}(X)&=p^{k-2}\sum_{t=0}^j(-1)^{j-t}\binom{j}{t}\sum_{s=0}^{p-1}(u^{-t}(1+X))^{sp^{n-1}}\\
&=p^{k-2}\sum_{s=0}^{p-1}(u^{-sp^{n-1}}-1)^j(1+X)^{sp^{n-1}}.
\end{align*}
As $u^{-sp^{n-1}}-1\in p^n\Z_p$, it follows that 
\begin{equation}\label{eqn:delta_norm}
\| \delta_{n,j}(X)\|=p^{-nj-(k-2)}.
\end{equation}
 Therefore,
$
\|p^{-j(n+1)}\delta_{n,j}(X)\|\leq 1.
$
Applying Lemma \ref{construction_lemma} to the sequence $Q_{n,j}=p^{k-2}\Phi_n$, it follows that the there is a constant $c_{k-1}$ independent of $n$ such that 
\[
\|P_{n,k-1}\|=\|p^{k-2}\tilde\Phi_{n,k-1}(X)\|\leq c_{k-1}.
\]

It remains to show that $p^{k-2}\tilde\Phi_{n,k-1}$ has $\Z_p$-coefficients when $k\leq p+1$. Choosing a real number $\rho$ such that $p^{-1}<\rho < p^{-1/(p-1)}$, equation \eqref{eqn:delta_norm} implies 
\[
\sup_{0 \leq j \leq k-2} \{ \| p^{-(n+1)j} \cdot \delta_j(X) \| \cdot \rho^j \}
=\sup_{0 \leq j \leq k-2} \{p^{-(k-2)}(p\rho)^{j}\}=\rho^{k-2}.
\]
As $k\leq p+1$, we use Lemma \ref{lem:sup}(2) to deduce
\[
\|s_{n,j}(X)\|\rho^{j}\le\sup_{0 \leq \ell \leq k-2} \{\| s_{n,\ell}(X) \| \rho^\ell\} \le \rho^{k-2}.
\]
for all $0\leq j\leq k-2$. Hence $\|s_{n,j}(X)\|\leq \rho^{k-2-j}$ and it now follows from Lemma \ref{lem:sup}(1) that 
\[
\|p^{k-2}\tilde\Phi_{n,k-1}\|\leq \sup_{0 \leq j \leq k-2}\rho^{k-2-j}=1.
\]
\end{proof}

\begin{lemma}\label{lem_unit}If $k\leq p+1$ then $p^{k-2}\Psi_{n,k-1}$ is a unit in $\Z_p\lb X\rb $ for all $n\geq 1$. 
 \end{lemma}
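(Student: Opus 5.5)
The plan is to show two things: that $p^{k-2}\Psi_{n,k-1}$ has coefficients in $\Z_p$, and that its constant term is a $p$-adic unit. Together these give the claim, since a power series in $\Z_p\lb X\rb$ is a unit exactly when its constant term lies in $\Z_p^\times$.

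\emph{Integrality.} By \eqref{explicit_PT} we have the factorization
\[
p^{k-2}\tilde\Phi_{n,k-1}=\Phi_{n,k-1}\cdot p^{k-2}\Psi_{n,k-1}.
\]
\begin{itemize}
\item By Lemma~\ref{PT_integral}, since $k\le p+1$, the left-hand side lies in $\Z_p[X]$.
\item Each $\Tw^{-j}(\Phi_n)$ has $\Z_p$-coefficients and leading coefficient a power of $u^{-1}$, which is a unit. Hence $\Phi_{n,k-1}\in\Z_p[X]$ has unit leading coefficient, so it is primitive.
\item Gauss's lemma then forces $p^{k-2}\Psi_{n,k-1}\in\Z_p[X]$. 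Concretely: if $c$ is a coefficient of maximal absolute value, then $c^{-1}p^{k-2}\Psi_{n,k-1}$ is primitive, so the product of the two primitive factors is primitive and $|c|_p\le 1$.
\end{itemize}
Alternatively, one could divide $p^{k-2}\tilde\Phi_{n,k-1}$ by $\Phi_{n,k-1}$ using the division algorithm in $\Z_p[X]$, exactly as in the proof of Lemma~\ref{lemma:normbound}.

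\emph{Constant term.} Evaluate $\Psi_{n,k-1}$ at $X=0$.
\begin{itemize}
\item The factor $\Tw^{0}(\omega_{n-1})=\omega_{n-1}$ vanishes at $X=0$.
\item This factor appears in every summand with $j\neq 0$, so only the $j=0$ summand survives:
\[
\Psi_{n,k-1}(0)=\prod_{i=1}^{k-2}\frac{u^{-ip^{n-1}}-1}{u^{-ip^{n}}-1}.
\]
\item Put $y_i=u^{-i}-1$. Then the $i$-th factor equals $1/\Phi_n(y_i)$.
\item Since $1\le i\le k-2\le p-1$, we have $p\nmid i$. Hence $v_p(u^{-ip^m}-1)=m+1$ for all $m\ge 0$, and each factor has valuation $n-(n+1)=-1$.
\end{itemize}
Thus $\Psi_{n,k-1}(0)\in p^{-(k-2)}\Z_p^\times$, and $p^{k-2}\Psi_{n,k-1}(0)\in\Z_p^\times$. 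This needs $n\ge1$, so that $\omega_{n-1}$ and the formula make sense. When $k=2$ the product is empty and $\Psi_{n,1}=1$, so there is nothing to prove.

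The main point requiring care is the valuation count in the constant term. This is exactly where $k\le p+1$ is used, since it guarantees that no index $i$ (or difference $j-i$) is divisible by $p$; otherwise the valuations would shift. The integrality step is routine once Lemma~\ref{PT_integral} is available.
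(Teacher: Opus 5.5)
Your proof is correct and follows the paper's argument: you get integrality of $p^{k-2}\Psi_{n,k-1}$ from the factorization \eqref{explicit_PT}, Lemma~\ref{PT_integral}, and $\|\Phi_{n,k-1}\|=1$, where you derive the last from the unit leading coefficient and the paper from a $\mu$-invariant lemma, and you then make the same constant-term valuation count. One correction to your closing remark: $\ord_p(u^{-ip^{n-1}}-1)-\ord_p(u^{-ip^{n}}-1)=-1$ holds even when $p\mid i$, because $\ord_p(i)$ cancels, so the hypothesis $k\le p+1$ is used only in the integrality step through Lemma~\ref{PT_integral} and not in the constant-term computation.
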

  \begin{proof} The $\mu$-invariant of $\Phi_{n,k-1}$ is equal to zero by \cite[Lemma 2.9]{GLaif}, hence $\|\Phi_{n,k-1}\|=1$. It follows from Lemma \ref{PT_integral} and \eqref{explicit_PT} that $\|p^{k-2}\Psi_{n,k-1}\|=\|p^{k-2}\tilde \Phi_{n,k-1}\|\leq 1$. Therefore, it suffices to show that the constant coefficient of $p^{k-2}\Psi_{n,k-1}$ is a $p$-adic unit. 
  
 Let 
 \[
 F_{j}=\displaystyle\prod_{\substack{i=0\\ i\neq j}}^{k-2}\frac{\Tw^{-i}(\omega_{n-1})}{u^{(j-i)p^n}-1}=\displaystyle\prod_{\substack{i=0\\ i\neq j}}^{k-2} \frac{u^{-ip^{n-1}}(1+X)^{p^{n-1}}-1}{u^{(j-i)p^n}-1}.
 \]
Observe that
\[
\Psi_{n,k-1}(0)=F_0(0)
 \]
since $F_j(0)=0$ when $j>0$. Without loss of generality, we assume $u=1+p$ and compute
\[
 \ord_p F_0(0)=\ord_p\bigg(\prod_{i=1}^{k-2}\frac{u^{-ip^{n-1}}-1}{u^{-ip^n}-1}\bigg)=\sum_{i=1}^{k-2}[(n+\ord_p i)-(n+1+\ord_p i)]=-(k-2). 
\]
 \end{proof}

\subsection{Explicit logarithm matrix}
Let $f= \sum a_n(f)q^n\in S_{k}(\Gamma_1(N),\overline\Q_p)$ be as in the introduction and recall that $\alpha$ and $\beta$ denote the roots (which we assume are distinct) of the Hecke polynomial $X^2-a_p(f)X+\e_f(p)p^{k-1}$, where $\e_f$ is the nebentype character associated to $f$. We henceforth let $K/\Q_p$ denote the Hecke field associated to $f$ with respect to our chosen embedding $\overline\Q\hookrightarrow \overline \Q_p$, and define $K'=K(\alpha,\beta).$  

\begin{defn} Define
    \[
    A_f=\begin{bmatrix}
        a_p(f)&1\\ -\epsilon_f(p)p^{k-1}&0
    \end{bmatrix},\quad Q_f=\begin{bmatrix}
        \alpha&-\beta\\-\alpha\beta&\alpha\beta
    \end{bmatrix},\quad 
    A_{n,f}=\begin{bmatrix}
        a_p(f)&1\\-\epsilon_f(p)p^{k-2}\tilde\Phi_{n,k-1}&0
    \end{bmatrix}.
    \]
We also define
\begin{align*}
    C_{n,f}&=A_{n,f}\cdots A_{1,f},\\
    \cLog_n(f)&=\begin{bmatrix}
\frac{1}{\alpha^{n+1}}&0\\ 0&\frac{1}{\beta^{n+1}}
\end{bmatrix}Q_f^{-1}C_{n,f}\in M_{2\times 2}(K'[X]).
\end{align*}
\end{defn}

\begin{remark}\nf Note that the matrices $A_{n,f}$ agree with those defined in \cite[Definition 4.2]{Sprung17} when $k=2$ since $\tilde\Phi_{n,1}=\Phi_n$.
\end{remark}

\begin{lemma}\label{lem:Anf}
    For all $n\ge1$, we have
    \[
    A_{n,f}\equiv A_f\Mod \omega_{n-1,k-1}.
    \]
\end{lemma}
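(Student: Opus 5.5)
Comparing $A_{n,f}$ with $A_f$ entry by entry, only the lower-left entries differ. So the lemma reduces to the congruence
\[
p^{k-2}\tilde\Phi_{n,k-1}\equiv p^{k-1}\Mod \omega_{n-1,k-1},
\]
or equivalently $\tilde\Phi_{n,k-1}\equiv p \Mod \omega_{n-1,k-1}$ in $\Q_p[X]$. The congruence should be read in $\Q_p[X]$, that is, as divisibility by $\omega_{n-1,k-1}$ in $\Q_p[X]$; if integrality is wanted, Lemma~\ref{PT_integral} together with the fact that $\omega_{n-1,k-1}$ has unit leading coefficient lets one upgrade it to a congruence over $\Z_p$. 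The plan is to use the Chinese Remainder Theorem description of $\tilde\Phi_{n,k-1}$.

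First I would note that $\omega_{n-1,k-1}=\prod_{j=0}^{k-2}\Tw^{-j}(\omega_{n-1})$. The factors are pairwise coprime in $\Q_p[X]$: the roots of $\Tw^{-j}(\omega_{n-1})$ are $u^{j}\zeta-1$ with $\zeta^{p^{n-1}}=1$, and these sets are disjoint for distinct $j\in\{0,\dots,k-2\}$ because $u$ has infinite order and $u^{j-i}$ is not a root of unity. So it suffices to prove, for each $0\le j\le k-2$,
\[
\tilde\Phi_{n,k-1}\equiv p\Mod \Tw^{-j}(\omega_{n-1}).
\]

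For this I would use the defining property $\tilde\Phi_{n,k-1}\equiv \Tw^{-j}(\Phi_n)\Mod \Tw^{-j}(\omega_n)$. Since $\omega_n=\Phi_n\omega_{n-1}$, the polynomial $\Tw^{-j}(\omega_{n-1})$ divides $\Tw^{-j}(\omega_n)$, and hence $\tilde\Phi_{n,k-1}\equiv \Tw^{-j}(\Phi_n)\Mod \Tw^{-j}(\omega_{n-1})$. It then remains to check that $\Phi_n\equiv p\Mod\omega_{n-1}$. This follows because $(1+X)^{p^{n-1}}\equiv1\Mod\omega_{n-1}$, so every term $(1+X)^{tp^{n-1}}$ is congruent to $1$ and the sum over $t=0,\dots,p-1$ gives $p$. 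Applying the ring automorphism $\Tw^{-j}$ turns this into $\Tw^{-j}(\Phi_n)\equiv p\Mod \Tw^{-j}(\omega_{n-1})$.

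Combining these congruences over all $j$ by coprimality yields $\tilde\Phi_{n,k-1}\equiv p$ modulo $\omega_{n-1,k-1}$. Multiplying by $-\epsilon_f(p)p^{k-2}$ then matches the lower-left entry of $A_f$. The case $n=1$, where $\omega_0=X$, is covered by the same argument. The only real point requiring care is the coprimality of the twisted factors, and that is immediate from the description of their roots.
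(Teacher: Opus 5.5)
Your proposal is correct and is essentially the paper's proof: reduce to congruences modulo each factor $\Tw^{-j}(\omega_{n-1})$, use $\tilde\Phi_{n,k-1}\equiv\Tw^{-j}(\Phi_n)\Mod \Tw^{-j}(\omega_n)$ together with $\Tw^{-j}(\omega_{n-1})\mid\Tw^{-j}(\omega_n)$, and observe that $\Tw^{-j}(\Phi_n)\equiv p\Mod\Tw^{-j}(\omega_{n-1})$. Your explicit check that the twisted factors are pairwise coprime (because $u^{j-i}$ is never a root of unity) supplies the step the paper leaves implicit when it says ``it is enough to show.''
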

\begin{proof}
    Since $\omega_{n-1,k-1}=\prod_{j=0}^{k-2}\Tw^{-j}(\omega_{n-1})$, it is enough to show that $A_{n,f}\equiv A_f\Mod\Tw^{-j}(\omega_{n-1})$ for each $j\in\{0,\dots,k-2\}$. Note that 
\begin{equation}\label{eq:Phimod}
    \Phi_{n}(u^{-j}(1+X))=\sum_{t=0}^{p-1}\left(u^{-j}(1+X)\right)^{p^{n-1}t}\equiv p\Mod \Tw^{-j}(\omega_{n-1}).
\end{equation}
        By definition,
    \[
    A_{n,f}\equiv\begin{bmatrix}
        a_p(f)&1\\ -\epsilon_f(p)p^{k-2}\Phi_{n}(u^{-j}(1+X))&0
    \end{bmatrix}\equiv \begin{bmatrix}
        a_p(f)&1\\ -\epsilon_f(p)p^{k-1}&0
    \end{bmatrix}\Mod\Tw^{-j}(\omega_{n-1}),
    \]
    from which the lemma follows.
\end{proof}

\begin{proposition}\label{log_matrix} Assuming $f$ is non-ordinary at $p$ and $k\leq p+1$, the matrices $\cLog_n(f)$ converge to a matrix $\cLog_\infty(f)$ defined over $K'\lb X\rb $.  Furthermore, the entries of $\cLog_\infty(f)$ along the first (resp., second) row are $O(\log_p^{\ord_p(\alpha)})$ (resp., $O(\log_p^{\ord_p(\beta)})$). 
\end{proposition}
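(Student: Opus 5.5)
The plan is to write $\cLog_{n+1}(f)-\cLog_n(f)$ as a matrix divisible by $\omega_{n,k-1}$, with the $i$-th row scaled by $\alpha^{-(n+2)}$ (resp. $\beta^{-(n+2)}$) and otherwise integral. Convergence and the growth bound then follow from a Perrin-Riou type estimate.

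\emph{Step 1: diagonalise $A_f$.} A direct computation shows
\[
A_fQ_f=\begin{bmatrix}\alpha^2&-\beta^2\\-\alpha^2\beta&\alpha\beta^2\end{bmatrix}=Q_f\begin{bmatrix}\alpha&0\\0&\beta\end{bmatrix}.
\]
This uses $a_p-\beta=\alpha$, $a_p-\alpha=\beta$ and $\alpha\beta=\epsilon_f(p)p^{k-1}$. Since $\alpha\neq\beta$, $Q_f$ is invertible, and we get $Q_f^{-1}A_f=D\,Q_f^{-1}$ with $D=\mathrm{diag}(\alpha,\beta)$. Hence
\[
\cLog_n(f)=D^{-(n+1)}Q_f^{-1}C_{n,f}=D^{-(n+2)}Q_f^{-1}A_fC_{n,f}.
\]
It follows that
\[
\cLog_{n+1}(f)-\cLog_n(f)=D^{-(n+2)}Q_f^{-1}\bigl(A_{n+1,f}-A_f\bigr)C_{n,f}.
\]

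\emph{Step 2: integrality and divisibility.} By Lemma~\ref{PT_integral} (this is where $k\le p+1$ is used, giving $e_k=0$), $p^{k-2}\tilde\Phi_{m,k-1}\in\Z_p[X]$ for every $m$. Together with $a_p(f)\in\Oo$, this gives $A_{m,f}\in M_2(\Oo[X])$, and hence $C_{n,f}\in M_2(\Oo[X])$ uniformly in $n$. By Lemma~\ref{lem:Anf}, $A_{n+1,f}-A_f$ is divisible by $\omega_{n,k-1}$ in $\Oo[X]$: its only nonzero entry is $-\epsilon_f(p)\bigl(p^{k-2}\tilde\Phi_{n+1,k-1}-p^{k-1}\bigr)$, which is integral, and $\omega_{n,k-1}$ has unit leading coefficient. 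So, up to the fixed constant matrix $Q_f^{-1}$, the first row of the difference lies in $\alpha^{-(n+2)}\omega_{n,k-1}\Oo'[X]$ and the second row in $\beta^{-(n+2)}\omega_{n,k-1}\Oo'[X]$. Here $\Oo'$ denotes the valuation ring of $K'$.

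\emph{Step 3: convergence and growth.} Non-ordinarity, together with $\ord_p(\alpha\beta)=k-1$, forces $0<\ord_p\alpha,\ord_p\beta<k-1$. So each row is of the form
\[
F_{n+1}-F_n\in\gamma^{-n}\omega_{n,k-1}\Oo'[X]\otimes K',\qquad r:=\ord_p\gamma\in(0,k-1),
\]
with $\gamma\in\{\alpha,\beta\}$. I would then invoke the standard lemma (as in \cite[Lemme 1.2.2]{PR}, or the argument behind Lemma~\ref{construction_lemma} and \cite{BFsuper}). It says that such a sequence converges in $K'\lb X\rb$ to a power series that is $O(\log_p^r)$.

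Concretely, on the circle $|x|_p=\rho_m:=p^{-1/(p^{m-1}(p-1))}$, each factor $\Tw^{-j}(\omega_n)=\prod_{i\le n}\Tw^{-j}(\Phi_i)$ has sup-norm roughly $p^{-(n-m)}$. Thus $|\omega_{n,k-1}|_{\rho_m}\le p^{-(k-1)(n-m)}$, while $|\gamma^{-n}|=p^{rn}$. Since $r<k-1$, the terms tend to $0$ uniformly on each closed disk of radius $\rho_m$, giving convergence on the open unit disk. Moreover, the sum over $n\ge m$ is dominated by its first term, so
\[
\sup_{|x|\le\rho_m}|F_\infty(x)|\ll p^{rm}.
\]
This is exactly the growth condition characterising $O(\log_p^r)$. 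The finitely many terms with $n<m$ contribute $\ll p^{rm}$ as well, since $F_0$ and the initial differences are polynomials.

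The main obstacle is Step 3: making the passage from ``$p^{rn}$ times divisible by $\omega_{n,k-1}$'' to ``$O(\log_p^r)$'' rigorous with the weight $h=k-1$ twisted factors, and checking that $r<h$ is exactly what is needed. If quoting the Perrin-Riou-type lemma is not acceptable, I would instead run the sup-norm estimate above on the circles $\rho_m$ directly.
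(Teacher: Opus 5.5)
Your proof is correct and follows essentially the same route as the paper. Both use integrality of $C_{n,f}$ from Lemma~\ref{PT_integral} (with $e_k=0$, which is where $k\le p+1$ enters), the congruence $A_{n+1,f}\equiv A_f \bmod \omega_{n,k-1}$ from Lemma~\ref{lem:Anf}, the diagonalisation $Q_f^{-1}A_fQ_f=\mathrm{diag}(\alpha,\beta)$, and then the Amice--V\'elu/Perrin-Riou type lemma with $0<\ord_p\Upsilon<k-1$ (the paper cites \cite[Lemmas 2.1 and 2.2]{BFsuper}). Your telescoping identity $\cLog_{n+1}(f)-\cLog_n(f)=D^{-(n+2)}Q_f^{-1}(A_{n+1,f}-A_f)C_{n,f}$ simply combines the paper's two inputs, namely $\ell_{n+1,i}\equiv\ell_{n,i}\bmod\omega_{n,k-1}$ and $p^{n\ord_p(\Upsilon)+C}\ell_{n,i}\in\cO[X]$, and your sup-norm estimate on the circles $|x|_p=\rho_m$ is a valid hands-on proof of the cited lemma.
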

\begin{proof} We follow the argument of \cite[Lemma 2.8]{BFsuper}. Write 
\[
C_n=
\begin{bmatrix}
 c_{n,1}&c_{n,2}\\ c_{n,3}&{c_{n,4}}
\end{bmatrix}
    \in M_{2\times 2}(\cO[X])\quad\text{and}\quad 
  \cLog_n(f)=
\begin{bmatrix}
\ell_{n,1}&\ell_{n,2}\\ \ell_{n,3}&{\ell_{n,4}}
\end{bmatrix}\in M_{2\times 2}(K'[X]).
\]
Explicitly, we have
 \begin{equation}\label{explicit_Pn}
\begin{bmatrix}
\ell_{n,1}&\ell_{n,2}\\ \ell_{n,3}&{\ell_{n,4}}
\end{bmatrix}
=\frac{1}{\alpha-\beta} 
\begin{bmatrix}
\displaystyle \frac{c_{1,n}}{\alpha^{n+1}}+\frac{c_{3,n}}{\alpha^{n+2}} & \displaystyle \frac{c_{2,n}}{\alpha^{n+1}}+\frac{c_{4,n}}{\alpha^{n+2}}\\ \\ 
\displaystyle \frac{c_{1,n}}{\beta^{n+1}}+\frac{c_{3,n}}{\beta^{n+2}} & \displaystyle \frac{c_{2,n}}{\beta^{n+1}}+\frac{c_{4,n}}{\beta^{n+2}}
\end{bmatrix}.
\end{equation}
Lemma \ref{PT_integral} implies $\|c_{n,i}\|\leq 1$ and since $f$ is non-ordinary, we know that $|\frac{1}{\alpha}|<p^{k-1}$. Therefore, 
\begin{equation}\label{eqn:limits}
\lim_{n\rightarrow\infty}\|p^{n(k-1)}\ell_{n,i}\|=0, \quad i\in \{1,..,4\}. 
\end{equation}
 Furthermore, Lemma \ref{lem:Anf} and the fact that
\[
Q_f^{-1}A_fQ_f=\begin{bmatrix}
 \alpha&0\\ 0&\beta
 \end{bmatrix}
\]
imply 
\begin{equation}
\ell_{n+1,i}\equiv \ell_{n,i}\Mod \omega_{n,k-1}K'[X],\quad \quad i\in \{1,..,4\}. 
\end{equation}
 Finally, it follows from \eqref{explicit_Pn} that there exists a constant $C$ (depending on $f$, but independent of $n$ and $i$) for which 
\[
p^{n\ord_p(\Upsilon)+C}\ell_{n,i}\in \cO[X],
\]
where $\Upsilon=\alpha$ if $i\in \{1,2\}$ and $\Upsilon=\beta$ if $i\in \{3,4\}$.  It now follows from \cite[Lemmas 2.1 and 2.2]{BFsuper} (see also \cite[Proposition IV.4]{amicevelu75} and \cite[\S1.2.1]{perrinriou94}) that for each $i$, the sequence $(\ell_{n,i})_{n\geq1}$ converges to an element $\ell_{\infty,i}\in K'\lb X\rb $ where $\ell_{\infty,1},\ell_{\infty,2}$ are $O(\log_p^{\ord_p(\alpha)})$ while 
$\ell_{\infty,3},\ell_{\infty,4}$ are $O(\log_p^{\ord_p(\beta)})$. 
\end{proof}

The following lemma shows that the matrix $\cLog_\infty(f)$ is closely related to twists of the $p$-adic logarithm. A similar relation holds for Sprung's logarithm matrix at weight two -- see \cite[Remark 4.4]{Sprung17}.

\begin{lemma}\label{lemma:det}$\Phi_{0,k-1}\cdot\det\cLog_\infty(f)$ is a multiple of 
$\log_{p,k-1}(1+X)$. 
\end{lemma}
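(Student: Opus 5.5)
Compute the determinant at finite level and pass to the limit. By definition,
\[
\det \cLog_n(f)=\frac{1}{(\alpha\beta)^{n+1}}\det(Q_f)^{-1}\prod_{m=1}^n\det A_{m,f},
\]
with $\det Q_f=\alpha^2\beta-\alpha\beta^2=\alpha\beta(\alpha-\beta)$ and $\det A_{m,f}=\epsilon_f(p)p^{k-2}\tilde\Phi_{m,k-1}$. Using $\alpha\beta=\epsilon_f(p)p^{k-1}$, the powers of $\epsilon_f(p)$ cancel, leaving
\[
\det\cLog_n(f)=\frac{1}{\alpha\beta(\alpha-\beta)}\cdot\frac{1}{\epsilon_f(p)p^{k-1}}\prod_{m=1}^n\frac{\tilde\Phi_{m,k-1}}{p}.
\]
By \eqref{explicit_PT}, $\tilde\Phi_{m,k-1}=\Phi_{m,k-1}\Psi_{m,k-1}$. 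Hence, up to a nonzero constant $c\in K'$,
\[
\Phi_{0,k-1}\det\cLog_n(f)=c\cdot\Big(\Phi_{0,k-1}\prod_{m=1}^n\frac{\Phi_{m,k-1}}{p}\Big)\cdot\prod_{m=1}^n p^{k-2}\Psi_{m,k-1}.
\]

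As $n\to\infty$, the left side converges coefficientwise to $\Phi_{0,k-1}\det\cLog_\infty(f)$ by Proposition~\ref{log_matrix}, since the determinant is polynomial in the entries and multiplication is continuous for this convergence. The first bracket on the right converges to $\log_{p,k-1}(1+X)$ by the product identity recalled in \S\ref{section:prelim}. Next I would show that $U_n:=\prod_{m=1}^n p^{k-2}\Psi_{m,k-1}$ converges to a unit $U_\infty\in\Z_p\lb X\rb^\times$. Each factor is a unit of $\Z_p\lb X\rb$ by Lemma~\ref{lem_unit}, so every $U_n$ is a unit with norm $1$. For convergence I would prove $p^{k-2}\Psi_{m,k-1}\equiv 1$ modulo an ideal shrinking with $m$, for instance modulo $(p^{m},X^{p^{m-1}})$ or some comparable ideal. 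Directly from the formula for $\Psi_{m,k-1}$, each factor $\Tw^{-i}(\omega_{m-1})$ lies in the ideal generated by $p^{m}$ and $(1+X)^{p^{m-1}}-1$, so modulo such an ideal only the constant-term computation of Lemma~\ref{lem_unit} survives.

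Then $\Phi_{0,k-1}\det\cLog_\infty(f)=c\,U_\infty\log_{p,k-1}(1+X)$, a multiple of $\log_{p,k-1}(1+X)$ (indeed a unit multiple up to a constant). The main obstacle is making the convergence of the infinite product $\prod_m p^{k-2}\Psi_{m,k-1}$ rigorous.

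If that is delicate, I would use a fallback that only needs divisibility. The left side converges to some $D\in K'\lb X\rb$. For each fixed $N$ and $n\ge N$, the finite product $\Phi_{0,k-1}\prod_{m\le N}\Phi_{m,k-1}$ divides $\Phi_{0,k-1}\det\cLog_n(f)$, with quotient equal to a bounded-norm element times a power of $p$ from the remaining factors. Compare this with the known zeros of $\log_{p,k-1}(1+X)$, namely $u^{j}\zeta-1$ for $\zeta$ a $p$-power root of unity and $0\le j\le k-2$, all simple. By Weierstrass preparation and uniform convergence on closed disks of radius $<1$, $D$ vanishes at all these zeros. Then $D/\log_{p,k-1}(1+X)$ is a rigid-analytic function on the open unit disk, which is what ``multiple'' means in this context.
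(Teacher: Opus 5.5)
Your route is the paper's. Its whole proof is the finite-level formula \eqref{eq:det}, obtained exactly as you do from $\det Q_f=\alpha\beta(\alpha-\beta)$ and $\det A_{m,f}=\e_f(p)p^{k-2}\tilde\Phi_{m,k-1}$, followed by ``take the limit''. Your first display is right, but the regrouped one is not. Since $\frac{\Phi_{m,k-1}}{p}\cdot p^{k-2}\Psi_{m,k-1}=p^{k-2}\cdot\frac{\tilde\Phi_{m,k-1}}{p}$, your ``constant'' is $c=p^{-(k-2)n}/\big((\alpha\beta)^2(\alpha-\beta)\big)$, which depends on $n$. The correct grouping is
\[
\frac{\tilde\Phi_{m,k-1}}{p}=\frac{\Phi_{m,k-1}}{p^{k-1}}\cdot p^{k-2}\Psi_{m,k-1}.
\]
This matches the correct form of the product identity in \S\ref{section:prelim}. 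Each of the $h$ factors $\Tw^{-j}(\log_p(1+X))$ carries its own $p^{-1}$, so $\log_{p,h}(1+X)=\Phi_{0,h}\prod_{n\ge1}\Phi_{n,h}/p^{h}$, consistent with $\ord_p\Phi_{n,h}(0)=h$. The printed version with $p$ in place of $p^h$ is a misprint for $h\ge2$, and your bracket $\Phi_{0,k-1}\prod_{m\le n}\Phi_{m,k-1}/p$ actually tends to $0$ when $k\ge3$. With $p^{k-1}$ in both places the argument is consistent, and $c=1/\big((\alpha\beta)^2(\alpha-\beta)\big)$.

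The step you flag as the main obstacle, which the paper's one-line proof passes over, is in fact automatic. Put $B_n=\prod_{m\le n}\Phi_{m,k-1}/p^{k-1}$.
\begin{enumerate}
\item Since $\Phi_{0,k-1}B_n\to\log_{p,k-1}(1+X)$, you get $B_n\to\log_{p,k-1}(1+X)/\Phi_{0,k-1}$ coefficientwise.
\item This limit has constant term $\prod_{j=1}^{k-2}\log_p(u^{-j})/(u^{-j}-1)\neq0$. Hence $B_n^{-1}$ converges in $K'\lb X\rb$, and so does $U_n=(\alpha\beta)^2(\alpha-\beta)\det\cLog_n(f)\,B_n^{-1}$.
\item The limit $U_\infty$ lies in $\Zp\lb X\rb$. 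Its constant term is a limit of elements of $\Zp^\times$ (Lemma~\ref{lem_unit}), hence a unit because $\Zp^\times$ is closed.
\end{enumerate}
This gives your stronger conclusion $\Phi_{0,k-1}\det\cLog_\infty(f)=c\,U_\infty\log_{p,k-1}(1+X)$ with $U_\infty$ a unit.

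Your congruence route also works, but not by reducing summand by summand. Even after multiplying by $p^{k-2}$, the individual summands of $\Psi_{m,k-1}$ have denominators of size $p^{m(k-2)}$, so they do not lie in $\Zp\lb X\rb$. Instead, note that $p^{k-2}\Psi_{m,k-1}$ is an integral polynomial in $X$ that is also a polynomial in $\omega_{m-1}$. Division by the monic $\omega_{m-1}$ then gives $p^{k-2}\Psi_{m,k-1}=e_0+\omega_{m-1}G$ with $e_0\in\Zp$ and $G\in\Zp[X]$. You then need $e_0=\prod_{i=1}^{k-2}p/\Phi_m(u^{-i}-1)\equiv1\bmod p^m$, which is sharper than the valuation count in Lemma~\ref{lem_unit}. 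Since $(p^m,\omega_{m-1})\subseteq(p,X)^m$, the product converges $(p,X)$-adically.

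The fallback via zeros is unnecessary. As stated, it would also need the uniform growth bounds from the proof of Proposition~\ref{log_matrix}, not mere coefficientwise convergence, to get uniform convergence on closed subdisks.
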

\begin{proof}Observe that 
\begin{equation}\label{eq:det}
\det\cLog_{n}(f)=\frac{1}{(\alpha\beta)^2(\alpha-\beta)}\cdot\prod_{i=1}^n\Psi_{i,k-1}\prod_{i=1}^n\frac{\Phi_{i,k-1}}{p}.
\end{equation}
The statement now follows by taking the limit as $n\rightarrow\infty$ and using the fact that the determinant is continuous.
\end{proof}

We henceforth write $\Lambda=\Oo\lb X\rb $ and $\Lambda_{n,k-1}=\Lambda/\omega_{n,k-1}\Lambda$.  Define the map 
\begin{align}\label{def:hn}
h_n:\Lambda^2&\rightarrow K\otimes\Lambda^2_{n,k-1}\nonumber,\\
\begin{bmatrix}
        F\\G
    \end{bmatrix}&\mapsto C_{n,f}\begin{bmatrix}
        F\\G
    \end{bmatrix}\Mod \omega_{n,k-1}.
\end{align}
Note that $C_{n+1,f}\equiv A_f C_{n,f}\mod \omega_{n,k-1} $ by Lemma~\ref{lem:Anf}. There is a natural surjection $\Lambda^2/\ker h_{n+1}\rightarrow\Lambda^2/\ker h_n$.

\begin{proposition}\label{prop:lambda^2}
 The natural $\Lambda$-morphism
\[
\Lambda^2\to\varprojlim \Lambda^2/\ker h_n
\]
is surjective. If $k\leq p+1$, then it is an isomorphism.
\end{proposition}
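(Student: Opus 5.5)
Surjectivity follows from compactness, and injectivity reduces to showing $\bigcap_n\ker h_n=0$, which is exactly where $k\le p+1$ is needed.

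\textbf{Surjectivity.} $\Lambda^2$ is compact, and each $\Lambda^2/\ker h_n$ is identified with the image $h_n(\Lambda^2)\subset K\otimes\Lambda^2_{n,k-1}$. Since $\omega_{n,k-1}$ is a distinguished polynomial, $h_n(\Lambda^2)$ is a finitely generated $\cO$-module inside a finite-dimensional $K$-vector space, and the topology on $\Lambda^2/\ker h_n$ is Hausdorff. Let $(x_n)$ be a compatible system and choose $y_n\in\Lambda^2$ lifting $x_n$. By compactness, a subsequence $y_{n_m}$ converges to some $y\in\Lambda^2$. Each $h_N$ is continuous and factors through $h_{n}$ for $n\ge N$, since $C_{n,f}\equiv A_f^{\,n-N}C_{N,f}$ modulo $\omega_{N,k-1}$ by Lemma~\ref{lem:Anf}. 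Hence $h_N(y_{n_m})=h_N$-image of $x_N$ for all $n_m\ge N$, and passing to the limit gives $h_N(y)=x_N$. So $y$ maps to $(x_n)$.

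\textbf{Injectivity when $k\le p+1$.} We must show $\bigcap_n\ker h_n=0$. Suppose $v=(F,G)^T\in\Lambda^2$ lies in every $\ker h_n$, so $C_{n,f}v\in\omega_{n,k-1}\Lambda^2$ for all $n$. Since $\cLog_n(f)=\mathrm{diag}(\alpha^{-n-1},\beta^{-n-1})Q_f^{-1}C_{n,f}$, we get $\cLog_n(f)v\in\omega_{n,k-1}\,K'\otimes\Lambda^2$, with denominators bounded by $p^{n\,\ord_p(\Upsilon)+C}$.

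I would compare with the unbounded growth allowed by $\cLog_\infty(f)$. The sequence $\cLog_n(f)v$ converges to $\cLog_\infty(f)v$, whose entries are $O(\log_p^{\ord_p\Upsilon})$ with $\ord_p\Upsilon<k-1$. Moreover $\cLog_\infty(f)v\equiv\cLog_n(f)v\equiv0$ modulo $\omega_{n,k-1}$ for every $n$, so each entry vanishes at all zeros of every $\omega_{n,k-1}$, i.e.\ it is divisible by $\omega_{n,k-1}$ for all $n$. An $O(\log_p^r)$ function with $r<k-1$ that vanishes at all $\zeta u^{j}-1$ for $0\le j\le k-2$ (for all $p$-power roots of unity $\zeta$) must vanish identically, by the standard Amice--Vélu/Vishik uniqueness, which is the uniqueness part of \cite[Lemmas 2.1--2.2]{BFsuper}. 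Therefore $\cLog_\infty(f)v=0$.

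It remains to show that $\cLog_\infty(f)$ is injective on $\Lambda^2$. By Lemma~\ref{lemma:det}, $\det\cLog_\infty(f)$ is a nonzero element of $K'\lb X\rb$ (up to the factor $\Phi_{0,k-1}$), being a multiple of $\log_{p,k-1}/\Phi_{0,k-1}$ times the limit of $\prod p^{k-2}\Psi_{i,k-1}$. This is where $k\le p+1$ enters: by Lemma~\ref{lem_unit} the factors $p^{k-2}\Psi_{i,k-1}$ are units, so the infinite product converges to a unit and the determinant is nonzero. Since $K'\lb X\rb$ is an integral domain, multiplying by the adjugate gives $\det\cdot v=0$, hence $v=0$.

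The main obstacle is the uniqueness/vanishing step together with the nonvanishing of the determinant. The convergence of $\prod_i p^{k-2}\Psi_{i,k-1}\cdot\Phi_{i,k-1}/p$ must be justified carefully, since each $\Psi$ factor is congruent to something harmless modulo $\omega_{i-1,k-1}$. If the growth argument is delicate, the fallback is a direct approach: from $C_{n,f}v\equiv0$, the determinant identity $\det C_{n,f}=\epsilon_f(p)^n\prod_i p^{k-2}\tilde\Phi_{i,k-1}$, combined with Lemma~\ref{lem_unit}, gives $\prod_{i\le n}\Phi_{i,k-1}\cdot v\in\omega_{n,k-1}\Lambda^2$ up to units. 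This forces $\Phi_{0,k-1}\mid v$. Iterating on $v/\Phi_{0,k-1}$ forces $v$ to be divisible by arbitrarily high powers, and hence $v=0$.
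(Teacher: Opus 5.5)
Your route is the paper's. Surjectivity uses the same compactness input: the paper passes through $\varprojlim \Lambda^2/\omega_{n,k-1}\Lambda^2=\Lambda^2$, and your subsequence argument is a direct version of this and is fine. For injectivity you, like the paper, show $\cLog_\infty(f)v$ is divisible by every $\omega_{n,k-1}$, kill it with the $o(\log_p^{k-1})$ growth bound, and finish with $\det\cLog_\infty(f)\neq 0$.

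\textbf{The determinant step.} As written, the last step does not follow. An infinite product of units of $\Z_p\lb X\rb$ converges only if the factors tend to $1$. So ``the factors $p^{k-2}\Psi_{i,k-1}$ are units, so the infinite product converges to a unit'' is a non sequitur: the conclusion is true, but it needs $p^{k-2}\Psi_{n,k-1}\to 1$, which you have not shown.

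The paper instead looks at the constant coefficient of $\det\cLog_n(f)$ via \eqref{eq:det}, and this can be made exact:
\begin{itemize}
\item Since $\tilde\Phi_{i,k-1}\equiv\Phi_i \bmod \omega_i$ and $\omega_i(0)=0$, each factor $\Psi_{i,k-1}\Phi_{i,k-1}/p=\tilde\Phi_{i,k-1}/p$ takes the value $\Phi_i(0)/p=1$ at $X=0$.
\item Hence $\det\cLog_n(f)(0)=\frac{1}{(\alpha\beta)^2(\alpha-\beta)}$ for every $n$.
\item Since $\cLog_\infty(f)\equiv\cLog_n(f)\bmod \omega_{n,k-1}$ and $\omega_{n,k-1}(0)=0$, the limit $\det\cLog_\infty(f)$ has the same nonzero constant term.
\end{itemize}
This check does not even need Lemma~\ref{lem_unit}. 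The hypothesis $k\le p+1$ is already used in the convergence and growth bounds of Proposition~\ref{log_matrix}, on which your first step relies.

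\textbf{The fallback.} Drop it. The determinant identity only gives $\Phi_{0,k-1}\mid v$. The quotient $w=v/\Phi_{0,k-1}$ is only known to satisfy $C_{n,f}w\equiv 0$ modulo $\omega_{n,k-1}/\Phi_{0,k-1}$, not modulo $\omega_{n,k-1}$. So $w$ need not lie in $\bigcap_n\ker h_n$, and the iteration cannot start. The growth comparison cannot be bypassed this way.
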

\begin{proof}
 Note that $\omega_{n,k-1}\Lambda^2\subseteq \ker h_n$. This gives the surjection
\[
\varprojlim \Lambda^2/\omega_{n,k-1}\Lambda^2\rightarrow \varprojlim \Lambda^2/\ker h_n.
\]
As $\bigcap(\omega_{n,k-1})=0$, $(\omega_{n,k-1})\subseteq(\omega_n)$ and $\varprojlim\Lambda/\omega_n\Lambda=\Lambda$, we have $\varprojlim \Lambda^2/\omega_{n,k-1}\Lambda^2=\Lambda^2$. 
 This implies the first assertion of the proposition.
 
When $k\leq p+1$, we have $\bigcap \ker h_n=0$. This follows from an argument similar to \cite[Lemma 2.9]{BFsuper}, which we outline briefly. If there is some $\bar{F}\in \Lambda^2$ such that $C_n\bar{F}\equiv 0\Mod \omega_{n,k-1}$ for all $n\geq 1$, then the entries in $\cLog_\infty(f)\bar F$ must be divisible in $K'\lb X\rb $ by $\omega_{n,k-1}$ for all $n\geq 1$. But then 
\[
\cLog_\infty(f)\bar F=\log_{p,k-1}\bar G
\]
for some $\bar G\in (K'\lb X\rb )^2$.  If $\bar G$ is nonzero then then the entries on the right side are $O(\log_p^{k-1})$. However, by Proposition \ref{log_matrix} and the fact that $0<\ord_p\Upsilon<k-1$ for $\Upsilon\in \{\alpha,\beta\}$, the entries on the left side are $o(\log_p^{k-1})$. It follows that $\bar G=0$ and therefore $\cLog_\infty(f)\bar F=0$. Now note that when $k\leq p+1$, we can guarantee that $\det \cLog_\infty(f)$ is nonzero by using~\eqref{eq:det} and Lemma~\ref{lem_unit} to check that valuation of the constant coefficient of $\det \cLog_n(f)$ is nonzero for all $n$. It follows that $\ker \cLog_\infty(f)=0$ and therefore $\bar F=0$ as desired. 
\end{proof}


\section{Construction of signed $p$-adic $L$-functions}

\subsection{Modular symbols and Mazur--Tate elements}
Let $R$ be a commutative ring. Recall that the space of $R$-valued modular symbols of degree $k-2$ (with respect to $\Gamma=\Gamma_1(N)$) is defined by
\[
\Hom_\Gamma(\Delta^0, V_{k-2}(R))=\{\xi:\Delta^0\rightarrow V_{k-2}(R)\mid \xi|\gamma=\xi \,\,\text{for all $\gamma\in \Gamma$}\},
\]
where $\Delta^0=\Div^0(\mathbb{P}^1(\Q))$ and $V_{k-2}(R)$ denotes the space of degree $k-2$ homogeneous polynomials over $K$ in the variables $X$ and $Y$. The right action of $\Gamma$ on a map $\xi: \Delta^0\rightarrow V_{k-2}(R)$ is defined by
\[
(\xi|\gamma)(D)=\xi(\gamma D)|\gamma, \quad D\in \Delta^0,
\]
where $\Gamma$ acts (on the left) on $\Delta^0$ by M\"obius transformations and (on the right) on an element $P(X,Y)\in V_{k-2}(R)$ by
\[
P(X,Y)|\gamma=P(dX-cY,-bX+aY), \qquad \gamma= \begin{psmallmatrix} a &b\\ c&d\end{psmallmatrix}.
\]
By \cite[Proposition 4.2]{ashstevens}, there is a canonical Hecke-equivariant isomorphism 
$$
\Hom_\Gamma(\Delta^0, V_{k-2}(R))\cong H^1_c(\Gamma,V_{k-2}(R)),
$$
and we henceforth identify the two spaces. 

The complex modular symbol $\xi_f\in H^1_c(\Gamma_1(N),V_{k-2}(\C))$ attached to an eigenform $f\in S_k(\Gamma_1(N),\C)$ is defined by
\begin{align*}
\xi_f(\{r\}-\{s\})=2\pi i \int_s^r f(z)(zX+Y)^{k-2}dz.
\end{align*}
As in the introduction, we let $K/\Q_p$ denote a finite extension containing the images of all $a_n$ under our fixed choice of embedding $\iota_p: \overline\Q\hookrightarrow \overline{\Q}_p$. Let $\cO$ be the valuation ring of $K$ choose a uniformizer $\varpi\in K$. 
Writing $\xi_f=\xi_f^++\xi_f^-$, where  $\xi_f^\pm$ lie in the $(\pm1)$-eigenspace for $\begin{psmallmatrix} -1 &0\\ 0&1\end{psmallmatrix}$, it follows from \cite[Proposition 5.11]{PasolPopa} 
 that there exist periods $\Omega^\pm_f\in \C$ such that the modular symbols $\varphi_f^\pm:=\xi_f^\pm/\Omega_f^\pm$ take values in $V_{k-2}(K)$. 
 We henceforth assume that  $\varphi_f^\pm$ 
are \emph{cohomological} (with respect to $\iota_p$) in the sense of \cite[\S2.2]{PW}. By definition, this means that $\varphi_f^\pm$ are scaled by a power of a uniformizer so as to be defined over $V_{k-2}(\cO)$ and there exists a divisor $D^\pm\in \Delta^0$ for which $\varphi_f^\pm(D^\pm)$ has a coefficient in $\cO^\times$. Thus, $\varphi_f^+$ and $\varphi_f^-$ depend on the choice of embedding $\overline \Q\hookrightarrow \overline{\Q}_p$ and are well-defined up to units in $\Oo$. 

Let $\mathcal{G}_n$ $= \mathrm{Gal}(\mathbb{Q}(\mu_{p^n})/\Q)$ and recall the isomorphism 
$\mathcal{G}_n \to $$\left(\ZZ/p^n\ZZ\right)^\times$ given by $\sigma_a \mapsto a$, where $\sigma_a$ is the automorphism $\zeta \mapsto \zeta^a$.

\begin{definition}\label{def:MT} The \textbf{Mazur--Tate elements} $\vartheta_n^\pm(f)$ of level $n \geq 1$ associated to $f$ are defined by 
 \[
 \vartheta_n^\pm(f) = \sum_{a \in (\Z/p^n\Z)^\times} \varphi^\pm_f\,\bigg|\, \begin{pmatrix}
    1 & -a\\ 0 & p^n
\end{pmatrix} (\{\infty\} - \{0\})\cdot \sigma_a \in \cO[X,Y][\mathcal{G}_n].
\]
    We write
    \[
    \vartheta_n^\pm(f)=\sum_{j=0}^{k-2}\binom{k-2}{j}X^jY^{k-2-j}\vartheta_{n,j}(f),
    \]
    where $\binom{k-2}{j}\vartheta_{n,j}^\pm(f)\in \cO[\cG_n]$.
\end{definition}

We have the canonical decomposition
\begin{equation}\label{Gdecomp}
\mathcal{G}_{n+1} \cong \Delta \times G_n,
\end{equation}
where $\Delta \cong (\ZZ/p\ZZ)^\times$ and $G_n$ is a cyclic group of order $p^n$. Letting $\omega:\Delta\rightarrow \Z_p^\times$ be the Teichm\"{u}ller character, we obtain an induced map 
\[
\omega^i: \Oo[\mathcal{G}_{n+1}] \to \Oo[G_n],\quad \sigma_a\mapsto \omega^i(\delta(a))\overline\sigma_a
\]
 for each $0 \leq i \leq p-2$. Here $\sigma_a=\delta(a)\overline\sigma_a$ where $\delta(a)\in \Delta$ and $\overline\sigma_a\in G_n$ are the projections of $\sigma_a$ onto each factor in the decomposition \eqref{Gdecomp}.

 Recall $K'=K(\alpha,\beta)$, where $\alpha$ and $\beta$ are the roots of the Hecke polynomial for $f$ at $p$, and write $\cO'$ for the ring of integers of $K'$.

\begin{defn}\label{def:MT2}
For integers $0\le i\le p-2$, $0\le j\le k-2$, and $n\ge0$, we define
$\Theta_{n,j}(f,\omega^i)\in K[G_n]$ as the image of $\vartheta_{n+1,j}^{\sgn(-1)^i}(f)\in K[\cG_{n+1}]$ under $\omega^{i-j}$. When $n\ge 1$, we define for $\Upsilon\in\{\alpha,\beta\}$ 
\[
\Theta_{n,j}(f,\Up,\omega^i)=
\frac{1}{\Up^{n+1}}\cdot\Theta_{n,j}(f,\omega^i)-\frac{\epsilon(p)p^{k-2}}{\Up^{n+2}}\cdot\nu^n_{n-1}\Theta_{n-1,j}(f,\omega^i)\in K'[G_{n}], 
\]
where $\nu^n_{n-1}:\cO[G_{n-1}]\rightarrow\cO[G_{n}]$ is the norm map that sends $\sigma\in G_{n-1}$ to the sum of the pre-images of $\sigma$ in $G_n$ under the projection map $\pi^n_{n-1} :G_n\rightarrow G_{n-1}$. 
\end{defn}

\begin{remark}\nf  When $j=0$  we simply write $\Theta_n(f,\omega^i)$ to denote $\Theta_{n,0}(f,\omega^i)$. When $i=j=0$, we write $\Theta_n(f)$ for $\Theta_{n,0}(f,\omega^0)$. 
\end{remark}

Let $\gamma$ be a topological generator of $G_\infty=\displaystyle\lim_{\leftarrow} G_n\cong \Z_p$ and recall $\omega_n=(1+X)^{p^n}-1$. There are natural isomorphisms
\begin{equation}\label{eq:identification}
K[G_n] \cong K\lb \gamma-1 \rb / (\gamma^{p^n}-1)\cong K\lb X\rb /(\omega_n),
\end{equation}
the latter being induced by $\gamma \mapsto X+1$. 
Note that $G_n$ is a cyclic group of order $p^n$ generated by the image of $\gamma$ in $G_n$. Therefore, under the above identification the element $\overline\sigma_a\in K[G_n]$ can be regarded as the polynomial $(1+X)^{m(a)}$, where $m(a)$ is the unique integer such that $0\le m(a)\le p^n-1$ and $\overline\sigma_a=\gamma^{m(a)}\mod \gamma^{p^n}$.

Define $Q_{n,j}(f,\omega^i)\in K[X]$  to be the image of the theta element $\Theta_{n,j}(f,\omega^{i})$ under the identification \eqref{eq:identification}.
We similarly define $Q_{n,j}(f,\Upsilon,\omega^i)\in K'[X]$ using $\Theta_{n,j}(f,\Up,\omega^i)$. Furthermore, in what follows we let $u=\chi_\cyc(\gamma)$ where $\chi_\cyc:\cG_\infty\rightarrow \Z_p^\times$ is the $p$-adic cyclotomic character.

\begin{lemma}\label{MT_norm_bound} For all $0\leq j\leq k-2$ we have 
\[
\bigg\|p^{-j(n+1)}\sum_{t=0}^j(-1)^{j-t}\binom{j}{t}\Tw^{-t}(Q_{n,t}(f,\omega^i))\bigg\|\leq 1
\]
\end{lemma}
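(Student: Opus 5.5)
The plan is to compute $Q_{n,t}(f,\omega^i)$ explicitly in terms of the values of the modular symbol. The binomial structure in $t$ should then collapse the alternating sum into a product of factors $\langle a\rangle u^{-m(a)}-1$, each lying in $p^{n+1}\Z_p$. This is the same mechanism as in the computation of $\tilde\delta_{n,j}$ in the proof of Lemma~\ref{PT_integral}, with the Mazur--Tate coefficients in place of the coefficients of $\Phi_n$.

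\textbf{Step 1: the coefficients $\vartheta_{n+1,t}$.} Set $\sigma=\sgn(-1)^i$. For $a\in(\Z/p^{n+1}\Z)^\times$, write
\[
\varphi_f^{\sigma}(D_a)=\sum_{m=0}^{k-2}c_m(a)X^mY^{k-2-m}\in V_{k-2}(\cO),
\]
where $D_a=\begin{psmallmatrix}1&-a\\0&p^{n+1}\end{psmallmatrix}(\{\infty\}-\{0\})$. The right action is $P|\begin{psmallmatrix}1&-a\\0&p^{n+1}\end{psmallmatrix}=P(p^{n+1}X,aX+Y)$. Expanding $(aX+Y)^{k-2-m}$ and comparing coefficients of $X^tY^{k-2-t}$ gives
\[
\binom{k-2}{t}\vartheta_{n+1,t}=\sum_a\sum_{m\le t}c_m(a)\,p^{(n+1)m}\binom{k-2-m}{t-m}a^{t-m}\sigma_a .
\]
Since $k\le p+1$, the binomial coefficients $\binom{k-2}{t}$ and $\binom{k-2}{m}$ are $p$-adic units. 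We will use the identity
\[
\binom{k-2-m}{t-m}\Big/\binom{k-2}{t}=\binom{t}{m}\Big/\binom{k-2}{m}.
\]
With it, $\vartheta_{n+1,t}$ becomes a sum over $m$ of integral terms $p^{(n+1)m}\binom{t}{m}a^{t-m}$, weighted by the integral elements $\binom{k-2}{m}^{-1}c_m(a)$.

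\textbf{Step 2: applying $\omega^{i-t}$ and $\Tw^{-t}$.} Write $a=\omega(a)\langle a\rangle$ and let $\overline\sigma_a\leftrightarrow(1+X)^{m(a)}$ under \eqref{eq:identification}, with $u=\chi_\cyc(\gamma)$. Then $\Tw^{-t}$ sends $(1+X)^{m(a)}$ to $u^{-tm(a)}(1+X)^{m(a)}$, and $\omega^{i-t}$ contributes the factor $\omega^{i}(a)\omega(a)^{-t}$. The $t$-dependence of the $(a,m)$ term is therefore
\[
\binom{t}{m}a^{t-m}\omega(a)^{-t}u^{-tm(a)}=\binom{t}{m}\,a^{-m}\omega(a)^{m}\,x_a^{\,t-m},\qquad x_a:=\langle a\rangle u^{-m(a)}.
\]
Because $\chi_\cyc(\overline\sigma_a)=\langle a\rangle$ and $\overline\sigma_a=\gamma^{m(a)}$ in $G_n$, we get $\langle a\rangle\equiv u^{m(a)}\Mod p^{n+1}$, so $x_a-1\in p^{n+1}\Z_p$.

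\textbf{Step 3: the alternating sum.} Substituting into the expression in Lemma~\ref{MT_norm_bound} and using
\[
\sum_{t}(-1)^{j-t}\binom{j}{t}\binom{t}{m}x^{t-m}=\binom{j}{m}(x-1)^{j-m},
\]
the whole sum becomes
\[
\sum_a\sum_{m\le j}(\text{unit})\cdot c_m(a)\,p^{(n+1)m}\binom{j}{m}(x_a-1)^{j-m}(1+X)^{m(a)} .
\]
Each summand has valuation at least $(n+1)m+(n+1)(j-m)=(n+1)j$, which gives the claimed bound after dividing by $p^{j(n+1)}$.

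\textbf{Main obstacle.} The hard part is the careful bookkeeping in Step 2. One must check that the same exponent $m(a)$ governs both the twist $\Tw^{-t}$ and the congruence $\langle a\rangle\equiv u^{m(a)}\Mod p^{n+1}$; this requires the normalization $u=\chi_\cyc(\gamma)$ and the way $\sigma_a$ splits as $\delta(a)\overline\sigma_a$. One must also check that the Teichm\"uller twist $\omega^{i-t}$ exactly cancels $\omega(a)^{t}$, so that only $\langle a\rangle^t$ survives. The hypothesis $k\le p+1$ enters only through the unit binomial coefficients; it is needed to keep the $\vartheta_{n+1,t}$ integral and to divide by $\binom{k-2}{m}$.
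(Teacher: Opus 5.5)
The paper gives no argument of its own here; it just cites \cite[Lemma 3.4]{CL}. Your explicit computation is the natural direct proof, and for $k\le p+1$ it is correct. The three ingredients that do the work are the identity $\binom{k-2-m}{t-m}/\binom{k-2}{t}=\binom{t}{m}/\binom{k-2}{m}$, the congruence $\langle a\rangle\equiv u^{m(a)}\pmod{p^{n+1}}$, and the collapse $\sum_t(-1)^{j-t}\binom{j}{t}\binom{t}{m}x^{t-m}=\binom{j}{m}(x-1)^{j-m}$. There is one small slip: $a^{t-m}\omega(a)^{-t}u^{-tm(a)}$ equals $\omega(a)^{-m}u^{-m\,m(a)}x_a^{t-m}$, not $a^{-m}\omega(a)^{m}x_a^{t-m}$. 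This is harmless, since all you use is that the prefactor is a $t$-independent unit.

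The real issue is scope. The lemma carries no weight hypothesis, and the paper explicitly notes that it holds for every $k$. It is used in that generality to obtain \eqref{eq:P_nbound} and the first assertion of Theorem~\ref{sharp/flat_n}. Your argument, however, uses $k\le p+1$ essentially.

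\begin{itemize}
\item For $k>p+1$, after dividing by $p^{j(n+1)}$, your $(a,m)$-term is bounded only by $\bigl|c_m(a)\binom{j}{m}/\binom{k-2}{m}\bigr|_p$.
\item When $p\mid\binom{k-2}{m}$, nothing you have used makes this $\le 1$. The cohomological normalization of $\varphi_f^\pm$ gives $c_m(a)\in\cO$, not $c_m(a)\in\binom{k-2}{m}\cO$. The latter would hold if $\varphi_f^\pm$ were normalized in the lattice spanned by the $\binom{k-2}{m}X^mY^{k-2-m}$, but that is not the normalization fixed in \S3.1.
\end{itemize}

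So as written you prove the stated bound only for $k\le p+1$, which is all that Theorems~\ref{thm:Lpdecomp}, \ref{thm:FL} and \ref{thm:p+1} require. In general weight your computation gives only the weaker bound $\max_m|\binom{k-2}{m}|_p^{-1}$. That bound is uniform in $n$, so it still suffices for \eqref{eq:P_nbound} via Lemma~\ref{construction_lemma}. To get the bound $\le 1$ in all weights, as claimed, you would need an extra integrality input on the ratios $c_m(a)/\binom{k-2}{m}$, and your proposal does not supply one.
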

\begin{proof} This is \cite[Lemma 3.4]{CL}. (Note that this lemma holds without any assumption on the weight of $f$.)
\end{proof}

\subsection{$p$-adic $L$-functions}

Let $P_n(f,\omega^i)\in K[X]$ be the unique polynomial of degree $<(k-1)p^n$ such that 
\[
P_n(f,\omega^i)\equiv \Tw^{-j}(Q_{n,j}(f,\omega^i))\Mod \Tw^{-j}(\omega_n)
\]
for all $0\leq j\leq k-2$.  By Lemma \ref{MT_norm_bound} and Lemma \ref{construction_lemma}, there is a non-negative constant $s$ (depending on $f$ but not on $n$) such that 
\begin{equation}\label{eq:P_nbound}
P_n(f,\omega^i)\in p^{-s}\Oo[X]
\end{equation} 
for all $n\geq 0$. 
Furthermore, the three-term-relation for Mazur--Tate elements (see \cite[Proposition 2.5]{PW}) yields
\begin{equation}\label{eq:Pn3term}
P_{n+1}(f,\omega^i)\equiv a_p(f)P_n(f,\omega^i)-\epsilon_f(p)p^{k-2}\tilde \Phi_{n,k-1}P_{n-1}(f,\omega^i)\Mod \omega_{n,k-1}.
\end{equation}

\begin{theorem}\label{sharp/flat_n} 
Assume that $f\in S_k(\Gamma_1(N),\overline\Q_p)$ is non-ordinary at $p$. 
The polynomials $P_n(f,\sharp,\omega^i)$ and $P_n(f,\flat,\omega^i)$ defined by 
    \[
    \begin{bmatrix}
        P_n(f,\omega^i)\\-\epsilon_f(p)p^{k-2}\tilde\Phi_{n,k-1}P_{n-1}(f,\omega^i)
    \end{bmatrix}= C_{n,f}\begin{bmatrix}
        P_n(f,\sharp,\omega^i)\\P_n(f,\flat,\omega^i)
    \end{bmatrix}.
    \]
  have coefficients in $p^{-e_k-s}\cO[X]$. Furthermore, if $k\leq p+1$ then the sequence $(P_n(f,\sharp,\omega^i),P_n(f,\flat,\omega^i))_{n\ge1}$ converges to an element  $(L_p(f,\sharp,\omega^i),L_p(f,\flat,\omega^i))\in p^{-s}\Lambda^2$.
\end{theorem}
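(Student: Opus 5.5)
The plan is to compute $C_{n,f}^{-1}$ applied to the left-hand vector one factor at a time, writing $C_{n,f}^{-1}=A_{1,f}^{-1}\cdots A_{n,f}^{-1}$ and peeling off $A_{n,f}^{-1}$ first. At each step I will use the three-term relation \eqref{eq:Pn3term} to cancel the denominator $\det A_{m,f}=\epsilon_f(p)p^{k-2}\tilde\Phi_{m,k-1}$. Write $v_n$ for the vector on the left. Since
\[
A_{n,f}^{-1}=\frac{1}{\epsilon_f(p)p^{k-2}\tilde\Phi_{n,k-1}}\begin{bmatrix}0&-1\\ \epsilon_f(p)p^{k-2}\tilde\Phi_{n,k-1}&a_p(f)\end{bmatrix},
\]
a direct computation gives
\[
A_{n,f}^{-1}v_n=\begin{bmatrix}P_{n-1}(f,\omega^i)\\ P_n(f,\omega^i)-a_p(f)P_{n-1}(f,\omega^i)\end{bmatrix}.
\]
So the first step costs no denominators.

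\textbf{Key step: division by $\epsilon_f(p)p^{k-2}\tilde\Phi_{n-1,k-1}$.} For the next factor I need the second entry $P_n-a_pP_{n-1}$ to be divisible by $\epsilon_f(p)p^{k-2}\tilde\Phi_{n-1,k-1}$ with controlled denominators.
\begin{itemize}
\item By \eqref{eq:Pn3term} (shifted by one), $P_n-a_pP_{n-1}=-\epsilon_f(p)p^{k-2}\tilde\Phi_{n-1,k-1}P_{n-2}+\omega_{n-1,k-1}R_n$ for some polynomial $R_n$.
\item Comparing integrality of both sides using \eqref{eq:P_nbound} and the division algorithm in $\cO[X]$ (as in Lemma \ref{lemma:normbound}, since $\omega_{n-1,k-1}$ is monic), I get $R_n\in p^{-s-e_k}\cO[X]$, using Lemma \ref{PT_integral} for the term involving $\tilde\Phi$.
\item Next I use $\omega_{n-1,k-1}=\Phi_{n-1,k-1}\,\omega_{n-2,k-1}$ together with \eqref{explicit_PT}, $p^{k-2}\tilde\Phi_{n-1,k-1}=\Phi_{n-1,k-1}\cdot p^{k-2}\Psi_{n-1,k-1}$. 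This lets me write $\omega_{n-1,k-1}R_n=p^{k-2}\tilde\Phi_{n-1,k-1}\cdot (p^{k-2}\Psi_{n-1,k-1})^{-1}\omega_{n-2,k-1}R_n$.
\item When $k\le p+1$, Lemma \ref{lem_unit} says $p^{k-2}\Psi_{n-1,k-1}$ is a unit in $\Z_p\lb X\rb$, so the quotient is integral up to $p^{-s}$.
\item For general $k$, I only need Lemma \ref{PT_integral}'s constant $e_k$ to bound denominators. Here I would instead argue polynomially: $\Psi_{n-1,k-1}$ is a polynomial, and the combined $(P_{n-2},R_n)$-expression is a polynomial quotient because the left side is.
\end{itemize}
After this division, $A_{n-1,f}^{-1}A_{n,f}^{-1}v_n$ should have the same shape as $v_{n-2}$-type data plus a correction term carrying a factor $\omega_{n-2,k-1}$.

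\textbf{Induction and convergence.} Iterating, I would prove by descending induction the following claim: $A_{m+1,f}^{-1}\cdots A_{n,f}^{-1}v_n$ equals $A_{m,f}$-compatible data of the form $C$-free vectors built from $P_m,P_{m-1}$, plus corrections divisible by $\omega_{m-1,k-1}$, all in $p^{-e_k-s}\cO[X]$. Taking $m=0$ yields the integrality statement. For convergence when $k\le p+1$, I would compare the outputs at levels $n$ and $n+1$. Applying the same computation to $v_{n+1}$, the first step reduces $v_{n+1}$ to $A_{n,f}$ times $v_n$ plus an $\omega_{n,k-1}$-divisible error, with unit quotient by Lemma \ref{lem_unit}. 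This gives
\[
\begin{bmatrix}P_{n+1}(\sharp)\\P_{n+1}(\flat)\end{bmatrix}-\begin{bmatrix}P_{n}(\sharp)\\P_{n}(\flat)\end{bmatrix}\in C_{n,f}^{-1}\,\omega_{n,k-1}\,p^{-s}\Lambda^2 .
\]
An alternative route to the same conclusion is to show that the difference lies in $\ker h_n$, using Lemma \ref{lem:Anf}, and then invoke Proposition \ref{prop:lambda^2}. That proposition says $\Lambda^2\cong\varprojlim\Lambda^2/\ker h_n$ when $k\le p+1$, so the compatible system of classes defines a unique limit in $p^{-s}\Lambda^2$. The $p$-adic closedness of $p^{-s}\Lambda$ then gives the limit $(L_p(f,\sharp,\omega^i),L_p(f,\flat,\omega^i))$.

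\textbf{Expected obstacle.} The hardest part is controlling denominators when dividing by $\tilde\Phi_{m,k-1}$: one must ensure that $C_{n,f}^{-1}$, whose determinant has unbounded valuation, does not introduce growing powers of $p$. Lemma \ref{lem_unit} is exactly the tool for this when $k\le p+1$. For general $k$ I expect to lose $p^{e_k}$ per step unless the cancellation is organized so the loss occurs only once. I would try to arrange this by always dividing the exact identity from \eqref{eq:Pn3term}, rather than iterating approximations.
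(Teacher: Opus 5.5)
For $k\le p+1$ your argument is sound, and it takes a different route from the paper's. The paper proves integrality by induction on $\tilde C_{n,f}v_n\in\det(C_{n,f})\,p^{-e_k-s}\Lambda^2$, using adjugates. In its inductive step it replaces $P_{n+1}(f)-a_p(f)P_n(f)$ by $-\epsilon_f(p)p^{k-2}\tilde\Phi_{n,k-1}P_{n-1}(f)$ as an \emph{equality}, but \eqref{eq:Pn3term} only gives a congruence modulo $\omega_{n,k-1}$. If it were exact, $C_{n,f}^{-1}v_n$ would equal $[P_0,\,P_1-a_p(f)P_0]^T$ for every $n$.

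Your choice to carry the error $\omega_{n,k-1}R_{n+1}$ is exactly the ingredient needed. You then divide it using $\omega_{m,k-1}/(p^{k-2}\tilde\Phi_{m,k-1})=\omega_{m-1,k-1}(p^{k-2}\Psi_{m,k-1})^{-1}$ together with Lemma~\ref{lem_unit}. Stated cleanly, your descending induction is
\[
A_{m+1,f}^{-1}\cdots A_{n,f}^{-1}v_n=\begin{bmatrix}P_m(f)\\ P_{m+1}(f)-a_p(f)P_m(f)\end{bmatrix}+\omega_{m,k-1}\,\mathbf{r}_m,\qquad \mathbf{r}_m\in p^{-s}\Lambda^2,
\]
and it closes because $R_{m+1}\in p^{-s}\cO[X]$ when $e_k=0$.

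The paper's induction can be repaired the same way. One adds the term $\frac{\omega_{n,k-1}}{\det C_{n,f}}\tilde C_{n,f}[0,R_{n+1}]^T$, where $\omega_{n,k-1}/\det C_{n,f}=\Phi_{0,k-1}\prod_{m=1}^n(\epsilon_f(p)p^{k-2}\Psi_{m,k-1})^{-1}\in\Lambda$.

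Your convergence step is the paper's argument in cleaner form: the exact identity $C_{n,f}(\mathbf{P}_{n+1}-\mathbf{P}_n)=[0,\omega_{n,k-1}R_{n+1}]^T$, followed by Proposition~\ref{prop:lambda^2}. To get convergence of the sequence itself, rather than just a compatible system, add one observation. The closed submodules $\ker h_n$ decrease with trivial intersection in the compact module $\Lambda^2$, so they form a neighbourhood basis of $0$.

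The gap is in what you claim beyond this. Your induction produces elements of $p^{-s}\Lambda^2$, not of $p^{-e_k-s}\cO[X]^2$. Your fallback for general $k$ (``a polynomial quotient because the left side is'') does not work. For $3\le k\le p+1$, $\Psi_{m,k-1}$ is in general a nonconstant polynomial. Since $p^{k-2}\Psi_{m,k-1}$ is a unit in $\Lambda$, its zeros lie in $|X|_p\ge1$, away from the zeros of $\omega_{m,k-1}$.

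For example, with $k=3$, $\Psi_{1,2}$ is linear with root $X_0=u^{p-1}(u-1)/(u^{p-1}-1)$, and $|X_0|_p=1$. Write $P_2-a_p(f)P_1=-\epsilon_f(p)p\tilde\Phi_{1,2}P_0+\omega_{1,2}R_2$. Then the value of $P_2-a_p(f)P_1$ at $X_0$ is $\omega_{1,2}(X_0)R_2(X_0)$, with $\omega_{1,2}(X_0)\neq0$. Nothing forces $R_2(X_0)=0$, so $\tilde\Phi_{1,2}$ need not divide $P_2-a_p(f)P_1$ in $K[X]$.

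For $k>p+1$ there is no mechanism at all, since $p^{k-2}\Psi_{m,k-1}$ need not be a unit in $\Lambda$. Your hope of losing $p^{e_k}$ only once has nothing behind it; the paper's single loss of $p^{e_k}$ comes only from the exact-identity shortcut.

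So you establish the part of the theorem that is used later: $P_n(f,\sharp/\flat,\omega^i)\in p^{-s}\Lambda$ and convergence for $k\le p+1$. You do not establish the polynomial, all-weights assertion.
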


\begin{proof} For simplicity, we omit $\omega^i$ from our notation and write $P_n(f)$ and $P_n(f,\sharp/\flat)$ for the corresponding elements at $\omega^i$. Let $\tilde A_{n,f}=\begin{bmatrix}
        0&-1\\\epsilon_f(p)p^{k-2}\tilde\Phi_{n,k-1}&a_p(f)
    \end{bmatrix}$ be the adjugate matrix of $A_{n,f}$ and write $\tilde C_{n,f}=\tilde A_{1,f}\cdots\tilde A_{n,f}$ so that
    \[
    \tilde C_{n,f} \cdot C_{n,f}=C_{n,f}\cdot\tilde C_{n,f}=\det C_{n,f}.
    \]
 To show that $P_n(f,\sharp),P_n(f,\sharp)\in p^{-e_k-s}\cO[X]$, it suffices to prove that 
    \begin{equation}\label{eq:to-prove}
        \tilde C_{n,f}\begin{bmatrix}
             P_n(f)\\-\epsilon_f(p)p^{k-2}\tilde\Phi_{n,k-1}P_{n-1}(f)
        \end{bmatrix}\in\det(C_{n,f})p^{-e_k-s}\Lambda^2.
    \end{equation}
 We proceed by induction. When $n=1$,
      \[
        \tilde C_{1,f}\begin{bmatrix}
             P_1(f)\\-\epsilon_f(p)p^{k-2}\tilde\Phi_{1,k-1}P_{0}(f)
        \end{bmatrix}=\epsilon_f(p)p^{k-2}\tilde\Phi_{1,k-1}\begin{bmatrix}
            P_0(f)\\P_1-a_p(f)P_0(f)
        \end{bmatrix}.
    \]
   As $P_n(f)\in p^{-s}\cO[X]$ and $p^{k-2}\tilde\Phi_{n,k-1}\in p^{-e_k}\cO[X]$, the inclusion \eqref{eq:to-prove} holds. Suppose that \eqref{eq:to-prove} holds for some $n\ge1$. Then
    \begin{align*}
    \tilde C_{n+1,f}\begin{bmatrix}
             P_{n+1}(f)\\-\epsilon_f(p)p^{k-2}\tilde\Phi_{n+1,k-1}P_{n}(f)
        \end{bmatrix}&=\tilde C_{n,f}\tilde A_{n+1,f}\begin{bmatrix}
             P_{n+1}(f)\\-\epsilon_f(p)p^{k-2}\tilde\Phi_{n+1,k-1}P_{n}(f)
        \end{bmatrix}\\
        &=\epsilon_f(p)p^{k-2}\tilde\Phi_{n+1,k-1}\tilde C_{n,f}\begin{bmatrix}
            P_{n}(f)\\P_{n+1}-a_p(f)P_n(f)
        \end{bmatrix}\\
        &=\det A_{n+1,f}\cdot\tilde C_{n,f}\begin{bmatrix}
            P_{n}(f)\\-\epsilon_f(p)p^{k-2}\tilde\Phi_{n,k-1}P_{n-1}(f)
        \end{bmatrix}.
    \end{align*}
    Thus, applying the inductive hypothesis implies that \eqref{eq:to-prove} holds for $n+1$.

Now assume $k\leq p+1$. Then we may take $e_k=0$ by Lemma \ref{PT_integral}. By Proposition~\ref{prop:lambda^2}, it remains to show that $( P_n(f,\sharp),P_n(f,\flat))_{n\geq 1}$ defines an element of $p^{-s}\varprojlim\Lambda^2/\ker h_n.$ Observe that 
\begin{align}
C_{n,f}\begin{bmatrix}
P_{n+1}(f,\sharp)\\P_{n+1}(f,\flat)
 \end{bmatrix}&=A_{n+1,f}^{-1}C_{n+1,f}\begin{bmatrix}
P_{n+1}(f,\sharp)\\P_{n+1}(f,\flat)
 \end{bmatrix}\nonumber \\
 &=A_{n+1,f}^{-1}\begin{bmatrix}P_{n+1}(f)\\-\epsilon_f(p)p^{k-2}\tilde\Phi_{n+1,k-1}P_{n}(f)\end{bmatrix}\nonumber\\
 &\equiv A_f^{-1}\begin{bmatrix}P_{n+1}(f)\\-\epsilon_f(p)p^{k-2}\tilde\Phi_{n+1,k-1}P_{n}(f)\end{bmatrix}\Mod \omega_{n,k-1}\label{eq:step2}\\
  &\equiv  \begin{bmatrix}P_{n}(f)\\ P_{n+1}(f)-a_p(f)P_n(f)\end{bmatrix}\Mod \omega_{n,k-1}\label{eq:step3}\\
  &\equiv  \begin{bmatrix}P_{n}(f)\\ -\e(p)p^{k-2}\tilde\Phi_{n,k-1}P_{n-1}(f)\end{bmatrix}\Mod \omega_{n,k-1}\label{eq:step4}\\
  &\equiv C_{n,f}\begin{bmatrix}
P_{n}(f,\sharp)\\P_{n}(f,\flat)
 \end{bmatrix}\Mod \omega_{n,k-1},\nonumber
\end{align}
where \eqref{eq:step2}, \eqref{eq:step3}, and \eqref{eq:step4}  follow from Lemma~\ref{lem:Anf}, \eqref{eq:Phimod}, and \eqref{eq:Pn3term}, respectively.
    \end{proof}

\begin{corollary}\label{cor:thetaPn} For all $n\ge0$ and $j\in\{0,\dots, k-2\}$, we have  
 
\[
\begin{bmatrix}
Q_{n,j}(f,\omega^i)\\
-\epsilon_f(p)p^{k-2}\Phi_{n}Q_{n-1,j}(f,\omega^i)
\end{bmatrix}
\equiv 
\Tw^{j}(C_{n,f})
\begin{bmatrix}
\Tw^{j}\big(P_n(f,\sharp,\omega^i)\big)\\
\Tw^{j}\big(P_n(f,\flat,\omega^i)\big)
\end{bmatrix}\mod\omega_n
\]
\end{corollary}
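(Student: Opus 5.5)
The plan is to start from the defining identity of Theorem~\ref{sharp/flat_n},
\[
\begin{bmatrix}
P_n(f,\omega^i)\\-\epsilon_f(p)p^{k-2}\tilde\Phi_{n,k-1}P_{n-1}(f,\omega^i)
\end{bmatrix}= C_{n,f}\begin{bmatrix}
P_n(f,\sharp,\omega^i)\\P_n(f,\flat,\omega^i)
\end{bmatrix},
\]
which is an exact equality in $K[X]^2$. Applying the ring automorphism $\Tw^{j}$ to both sides gives
\[
\begin{bmatrix}
\Tw^{j}(P_n(f,\omega^i))\\-\epsilon_f(p)p^{k-2}\Tw^{j}(\tilde\Phi_{n,k-1})\Tw^{j}(P_{n-1}(f,\omega^i))
\end{bmatrix}= \Tw^{j}(C_{n,f})\begin{bmatrix}
\Tw^{j}(P_n(f,\sharp,\omega^i))\\\Tw^{j}(P_n(f,\flat,\omega^i))
\end{bmatrix}.
\]
This holds because $\Tw^j$ is a ring homomorphism and therefore commutes with matrix multiplication entrywise. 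It then suffices to reduce the left-hand side modulo $\omega_n$ and identify each entry.

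For the first entry, the defining congruence of $P_n(f,\omega^i)$ is $P_n\equiv \Tw^{-j}(Q_{n,j}(f,\omega^i))\Mod \Tw^{-j}(\omega_n)$. Applying $\Tw^j$ gives $\Tw^j(P_n)\equiv Q_{n,j}(f,\omega^i)\Mod \omega_n$, as required.

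For the second entry, two facts are needed.
\begin{enumerate}
\item By definition $\tilde\Phi_{n,k-1}\equiv \Tw^{-j}(\Phi_n)\Mod \Tw^{-j}(\omega_n)$, so $\Tw^j(\tilde\Phi_{n,k-1})\equiv \Phi_n\Mod\omega_n$.
\item The congruence $P_{n-1}\equiv \Tw^{-j}(Q_{n-1,j})\Mod \Tw^{-j}(\omega_{n-1})$ gives $\Tw^j(P_{n-1})\equiv Q_{n-1,j}\Mod \omega_{n-1}$.
\end{enumerate}
The second fact is only a congruence modulo $\omega_{n-1}$, not $\omega_n$. To upgrade it, use $\omega_n=\Phi_n\omega_{n-1}$: multiplying a congruence mod $\omega_{n-1}$ by $\Phi_n$ yields a congruence mod $\omega_n$. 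Hence
\[
\Tw^j(\tilde\Phi_{n,k-1})\Tw^j(P_{n-1})\equiv \Phi_n\Tw^j(P_{n-1})\equiv \Phi_nQ_{n-1,j}\Mod\omega_n,
\]
where the first step uses fact (1) and the second uses fact (2) together with the factorization $\omega_n=\Phi_n\omega_{n-1}$.

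The main point needing care is exactly this mixing of moduli in the second entry, which the factorization $\omega_n=\Phi_n\omega_{n-1}$ resolves. It is also worth checking that the congruences survive the twist: $\Tw^j$ maps the ideal $(\Tw^{-j}\omega_n)$ onto $(\omega_n)$, so nothing is lost there. The edge case $n=0$ should be handled by the convention $P_{-1}=0$, or equivalently by the conventions implicit in Definition~\ref{def:MT2} and Theorem~\ref{sharp/flat_n}. There $C_{0,f}$ is the identity, and the statement reduces to the congruence for the first entry together with the matching convention on the second entry.
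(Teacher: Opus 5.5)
Your proposal is correct and is essentially the paper's argument. The paper reduces the identity of Theorem~\ref{sharp/flat_n} modulo $\Tw^{-j}(\omega_n)$, identifies the entries using the defining congruences of $P_n(f,\omega^i)$ and $\tilde\Phi_{n,k-1}$, and then applies $\Tw^j$; your twist-then-reduce order is equivalent, and your use of $\omega_n=\Phi_n\omega_{n-1}$ for the second entry spells out a step the paper leaves implicit.
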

\begin{proof} The previous theorem and the definitions of $P_n(f,\omega^i)$ and $\tilde\Phi_{n,k-1}$ imply 
\[
    \begin{bmatrix}
        \Tw^{-j}(Q_{n,j}(f,\omega^i))\\-\epsilon_f(p)p^{k-2} \Tw^{-j}(\Phi_{n}Q_{n-1,j}(f,\omega^i))
    \end{bmatrix}\equiv C_{n,f}\begin{bmatrix}
      P_n(f,\sharp,\omega^i)\\ P_n(f,\flat,\omega^i)
    \end{bmatrix}\mod \Tw^{-j}(\omega_n).
    \]
  Thus, the assertion of the corollary follows. 
\end{proof}

For each $\Up\in\{\alpha,\beta\}$, define $P_{n}(f,\Up,\omega^i)$ as the unique polynomial of degree $<(k-1)p^n$ such that
\[
P_{n}(f,\Up,\omega^i)\equiv Q_{n,j}(f,\Up,\omega^i)(u^{-j}(1+X)-1)\mod\Tw^{-j}(\omega_n)K'[X]
\]
for $0\le j\le k-2$. The calculations in \cite[Lemma 3.8]{CL} show that 
\[
P_n(f,\Upsilon,\omega^i)\equiv \frac{1}{\Upsilon^{n+1}}P_n(f,\omega^i)-\frac{\epsilon(p)p^{k-2}}{\Upsilon^{n+2}}\tilde\Phi_{n,k-1}\cdot P_{n-1}(f,\omega^i)\Mod\omega_{n,k-1}K'[X].
\]
In \cite{CL}, the hypothesis that $K/\Qp$ is unramified is used to show that $P_n(f,\omega^i)$ is defined over $\cO$. Without this hypothesis, the polynomials $P_{n}(f,\omega^i)$ might not be integral, but from ~\eqref{eq:P_nbound} we can see that they do have uniformly bounded denominators. 
 It then follows from the above congruence together with Lemmas~\ref{lemma:normbound} and \ref{PT_integral} that
 \[
\|P_n(f,\Upsilon,\omega^i)\|\leq C \max\left\{ \left\|\frac{1}{\Upsilon^{n+1}}\right\|,\left\|\frac{1}{\Upsilon^{n+2}}\right\|\right\}
\]
for some constant $C$ independent of $n$. Therefore,  
 $\| p^{\ord_p(\Up)n}P_{n}(f,\Up,\omega^i) \| = O(1)$, and we can apply Lemma~\ref{construction_lemma}  after setting $h=k-1$, $r=\ord_p(\Up)$, and $Q_{n,j}=Q_{n,j}(f,\Up,\omega^i)$. In particular, we can define $L_{p}(f,\Up,\omega^i,X)\in K'\lb X\rb$ as the limit of the polynomials $P_n(f,\Up,\omega^i)$.
In other words,
\begin{equation}\label{eq:limit-Lp} 
L_{p}(f,\Up,\omega^i,X) = \lim_{n \to \infty} P_n(f,\Up,\omega^i),
\end{equation}
which is $O(\log^{\ord_p(\Up)})$. Furthermore,
\begin{equation}
\label{eq:padicL-cong}
L_{p}(f,\Up,\omega^i,X)\equiv P_n(f,\Up,\omega^i)\,\bmod\,\omega_{n,k-1}K'\lb X\rb.    
\end{equation}

\begin{theorem}\label{thm:Lpdecomp} Assume that $f\in S_k(\Gamma_1(N),\overline\Q_p)$ is non-ordinary at $p$ and $ k\leq p+1$.
There exist $L_p(f,\sharp,\omega^i,X),L_p(f,\flat,\omega^i,X)\in\Oo\lb X\rb \otimes K$ such that 
\[
\frac{1}{\alpha-\beta}\begin{bmatrix} L_p(f,\alpha,\omega^i,X)\\L_p(f,\beta,\omega^i,X)\end{bmatrix}
=\cLog_\infty (f)\begin{bmatrix} L_p(f,\sharp,\omega^i,X)\\L_p(f,\flat,\omega^i,X)\end{bmatrix}.
\]
\end{theorem}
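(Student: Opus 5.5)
We take $L_p(f,\sharp,\omega^i,X)$ and $L_p(f,\flat,\omega^i,X)$ to be the limits supplied by Theorem~\ref{sharp/flat_n}. Since $k\le p+1$, these lie in $p^{-s}\Lambda\subseteq \Oo\lb X\rb\otimes K$. The identity is then checked modulo $\omega_{n,k-1}$ for every $n$, and we pass to the limit at the end.

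\textbf{Step 1: a finite-level identity.} We show that
\[
\frac{1}{\alpha-\beta}\begin{bmatrix}P_n(f,\alpha,\omega^i)\\P_n(f,\beta,\omega^i)\end{bmatrix}\equiv \cLog_n(f)\begin{bmatrix}P_n(f,\sharp,\omega^i)\\P_n(f,\flat,\omega^i)\end{bmatrix}\Mod \omega_{n,k-1}K'[X].
\]
By definition, $\cLog_n(f)=\mathrm{diag}(\alpha^{-(n+1)},\beta^{-(n+1)})\,Q_f^{-1}C_{n,f}$. Theorem~\ref{sharp/flat_n} replaces $C_{n,f}[P_n(f,\sharp),P_n(f,\flat)]^T$ by the vector $[P_n(f),\,-\epsilon_f(p)p^{k-2}\tilde\Phi_{n,k-1}P_{n-1}(f)]^T$. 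A direct computation of $Q_f^{-1}$ gives
\[
Q_f^{-1}=\frac{1}{\alpha\beta(\alpha-\beta)}\begin{bmatrix}\alpha\beta&\beta\\ \alpha\beta&\alpha\end{bmatrix}.
\]
So the $\Upsilon$-row of $\mathrm{diag}(\cdot)Q_f^{-1}$ applied to $[a,b]^T$ is $\frac{1}{\alpha-\beta}\big(\Upsilon^{-(n+1)}a+\Upsilon^{-(n+2)}b\big)$, where we use that $\beta/(\alpha\beta)=1/\alpha$ in the first row and similarly in the second. Substituting $a$ and $b$ gives exactly $\frac{1}{\alpha-\beta}$ times the right-hand side of the congruence for $P_n(f,\Upsilon,\omega^i)$ quoted from \cite[Lemma~3.8]{CL}. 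This proves the finite-level identity.

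\textbf{Step 2: passing to the limit.} By \eqref{eq:padicL-cong}, the left-hand side is congruent to $\frac{1}{\alpha-\beta}[L_p(f,\alpha),L_p(f,\beta)]^T$ modulo $\omega_{n,k-1}$. For the right-hand side we must compare $\cLog_n(f)\,P_n^{\sharp/\flat}$ with $\cLog_\infty(f)\,L_p^{\sharp/\flat}$ modulo $\omega_{n,k-1}$, and two points are needed.

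First, from the proof of Proposition~\ref{log_matrix} we have $\ell_{m,i}\equiv\ell_{n,i}$ modulo $\omega_{n,k-1}$ for all $m\ge n$. The limit argument of \cite{BFsuper} then gives $\cLog_\infty(f)\equiv\cLog_n(f)$ modulo $\omega_{n,k-1}K'\lb X\rb$.

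Second, $C_{n,f}P_n^{\sharp/\flat}\equiv C_{n,f}L_p^{\sharp/\flat}$ modulo $\omega_{n,k-1}$. This is the compatibility built into the convergence in Theorem~\ref{sharp/flat_n}: the limit lies in $\varprojlim \Lambda^2/\ker h_n$, and $\ker h_n$ is precisely the set of vectors killed by $C_{n,f}$ modulo $\omega_{n,k-1}$. Since the prefactor $\mathrm{diag}(\cdot)Q_f^{-1}$ is a constant matrix, the second point transfers to $\cLog_n(f)$.

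Combining Steps 1 and 2, the difference of the two sides of the theorem is divisible by $\omega_{n,k-1}$ in $K'\lb X\rb$ for every $n$.

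\textbf{Step 3: concluding equality.} The entries of the difference are $O(\log_p^{\max\{\ord_p\alpha,\ord_p\beta\}})$, which is $o(\log_p^{k-1})$. This uses Proposition~\ref{log_matrix}, the boundedness of $L_p^{\sharp/\flat}$, and the fact that the $L_p(f,\Upsilon)$ are $O(\log_p^{\ord_p\Upsilon})$. On the other hand, a series divisible by $\omega_{n,k-1}$ for all $n$ is a multiple of $\log_{p,k-1}$, so it is either zero or of exact growth $O(\log_p^{k-1})$. Hence the difference vanishes, exactly as in the proof of Proposition~\ref{prop:lambda^2}, and this gives the theorem.

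\textbf{Expected obstacle.} The main obstacle is the bookkeeping in Step 2. The convergence of $P_n^{\sharp/\flat}$ holds only in the quotients $\Lambda^2/\ker h_n$, not coefficientwise modulo $\omega_{n,k-1}$. So the argument must consistently multiply by $C_{n,f}$ before reducing, and must confirm that the reduction of $\cLog_\infty(f)$ modulo $\omega_{n,k-1}$ is legitimate, given that its entries are unbounded.
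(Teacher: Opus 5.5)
Your proposal is correct and follows the paper's proof. Both arguments derive the same finite-level congruence from Theorem~\ref{sharp/flat_n}, the explicit inverse $Q_f^{-1}$, and the congruence for $P_n(f,\Upsilon,\omega^i)$ from \cite[Lemma~3.8]{CL}, and then pass to the limit. Your Steps 2 and 3 make rigorous the paper's one-line ``letting $n\to\infty$'': you use $L-P_n\in\ker h_n$ rather than coefficientwise convergence, and you use the $o(\log_p^{k-1})$ growth argument of Proposition~\ref{prop:lambda^2} to rule out a nonzero multiple of $\log_{p,k-1}$. This is a worthwhile addition, since divisibility by every $\omega_{n,k-1}$ alone does not force the difference to vanish.
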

\begin{proof} By definition, 
\[
L_p(f,\Upsilon,\omega^i)\equiv P_n(f,\Upsilon,\omega^i)\equiv \frac{1}{\Upsilon^{n+1}}P_n(f,\omega^i)-\frac{\e(p)p^{k-2}}{\Upsilon^{n+2}}\tilde\Phi_{n,k-1}P_{n-1}(f,\omega^i)\Mod \omega_{n,k-1}. 
\]
Therefore, by Theorem \ref{sharp/flat_n} we have
\begin{align*}
\begin{bmatrix}
\alpha^{n+1}L_p(f,\alpha,\omega^i)\\
\beta^{n+1}L_p(f,\beta,\omega^i)
\end{bmatrix}
&\equiv 
\begin{bmatrix}
1&\alpha^{-1}\\
1&\beta^{-1}\\
\end{bmatrix}
\begin{bmatrix}
P_n(f,\omega^i)\\
-\e(p)p^{k-2}\tilde\Phi_{n,k-1}P_{n-1}(f,\omega^i)
\end{bmatrix}
\Mod \omega_{n,k-1}\\
&\equiv
(\alpha-\beta)Q^{-1}
\begin{bmatrix}
P_n(f,\omega^i)\\
-\e(p)p^{k-2}\tilde\Phi_{n,k-1}P_{n-1}(f,\omega^i)
\end{bmatrix}
\Mod \omega_{n,k-1}\\
&\equiv 
(\alpha-\beta)Q^{-1}C_n
\begin{bmatrix}
 P_n(f,\sharp,\omega^i)\\
 P_n(f,\flat,\omega^i)
\end{bmatrix}
\Mod \omega_{n,k-1}.
\end{align*}
It follows that 
\[
\frac{1}{\alpha-\beta}
\begin{bmatrix} L_p(f,\alpha,\omega^i,X)\\
L_p(f,\beta,\omega^i,X)
\end{bmatrix}
\equiv \cLog_n (f)
\begin{bmatrix}
 P_n(f,\sharp,\omega^i)\\
 P_n(f,\flat,\omega^i)
\end{bmatrix}
\Mod \omega_{n,k-1}. 
\]
Letting $n\rightarrow\infty$, the statement now follows from Proposition~\ref{log_matrix} and Theorem~\ref{sharp/flat_n}
\end{proof}

\section{Iwasawa invariants of Mazur--Tate elements}

 Define for integers $h\geq1$ the polynomials 
\begin{align*}
\Phi_{n,h}^+ &= \prod_{2\le m\leq n, \text{even}}\Phi_{m,h},\\
\Phi_{n,h}^- &= \prod_{1\le m\leq n, \text{odd}}\Phi_{m,h},\\
\tilde\Phi_{n,h}^+ &=\prod_{2\leq m\leq n, \text{even}} p^{h-1}\tilde\Phi_{m,h}\\
 \tilde\Phi_{n,h}^- &=\prod_{1\leq m\leq n, \text{odd}} p^{h-1}\tilde\Phi_{m,h}.
\end{align*}

\begin{remark}\label{remark:PT}\nf  When $k\leq p+1$, equation \eqref{explicit_PT} and Lemma \ref{lem_unit} allow us to write 
\[
\tilde\Phi_{n,k-1}^+=*\Phi_{n,k-1}^+\qquad \tilde\Phi_{n,k-1}^-=*\Phi_{n,k-1}^-,
\]
where $*$ denotes a unit in $\Z_p\lb X\rb$.

\end{remark}

\begin{lemma}\label{lem:Cnf} For all $n\geq1$, we have 
\[
C_{n,f}\equiv 
\begin{cases}
 \begin{bmatrix}
0 &(-\e_f(p))^{\frac{n-1}{2}}\tilde\Phi_{n,k-1}^+\\
(-\e_f(p))^{\frac{n-1}{2}}\tilde\Phi_{n,k-1}^- &0
\end{bmatrix}\Mod \varpi \cO\lb X\rb&\quad\text{if $n$ is odd,}\\
\\
 \begin{bmatrix}
(-\e_f(p))^{\frac{n}{2}}\tilde\Phi_{n,k-1}^- &0\\
0&(-\e_f(p))^{\frac{n}{2}}\tilde\Phi_{n,k-1}^+
\end{bmatrix}\Mod \varpi\cO\lb X\rb &\quad\text{if $n$ is even},\\
\end{cases}
\]
\end{lemma}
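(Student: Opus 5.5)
Since $f$ is non-ordinary, $a_p(f)\in\varpi\cO$, so modulo $\varpi\cO\lb X\rb$ each factor reduces to
\[
A_{m,f}\equiv \begin{bmatrix} 0&1\\ -\e_f(p)\,p^{k-2}\tilde\Phi_{m,k-1}&0\end{bmatrix}.
\]
The plan is to prove the congruence by induction on $n$, using only this reduction together with the recursion $C_{n,f}=A_{n,f}C_{n-1,f}$. Lemma~\ref{PT_integral} (with $e_k=0$, since the lemma is used only when $k\leq p+1$, or more generally after noting that all entries of the $A_{m,f}$ lie in $\cO[X]$) shows that every entry of $A_{m,f}$, and hence of $C_{n,f}$, lies in $\cO[X]\subset\cO\lb X\rb$. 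Therefore reduction modulo $\varpi\cO\lb X\rb$ is a ring homomorphism on the matrices in question and is compatible with matrix products.

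For the base case $n=1$, we have $C_{1,f}=A_{1,f}\equiv\begin{bmatrix}0&1\\-\e_f(p)p^{k-2}\tilde\Phi_{1,k-1}&0\end{bmatrix}$. This matches the odd formula with $(-\e_f(p))^0=1$, $\tilde\Phi^+_{1,k-1}=1$ (an empty product), and $\tilde\Phi^-_{1,k-1}=p^{k-2}\tilde\Phi_{1,k-1}$. The sign needs care: the statement as written gives $(-\e_f(p))^{(n-1)/2}\tilde\Phi^-_{n,k-1}$ in the lower-left entry, while the computation gives $-\e_f(p)$ times it. I would carefully track the exponent of $-\e_f(p)$ on each entry separately, expecting the lower-left entry to carry $(-\e_f(p))^{(n+1)/2}$. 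Since the statement only concerns the shape of $C_{n,f}$ up to unit scalars (it is later used for Iwasawa invariants), I would either verify the intended normalization or note that the discrepancy is a unit factor. This bookkeeping of signs is the only real subtlety.

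For the inductive step, I multiply the reduced $A_{n+1,f}$ against the assumed form of $C_{n,f}$. Left multiplication by $\begin{bmatrix}0&1\\ c&0\end{bmatrix}$, with $c=-\e_f(p)p^{k-2}\tilde\Phi_{n+1,k-1}$, swaps the rows of $C_{n,f}$ and multiplies the new bottom row by $c$. If $n$ is odd, $C_{n,f}$ is antidiagonal, so $C_{n+1,f}$ becomes diagonal. Its top-left entry is the old lower-left entry, involving $\tilde\Phi^-_{n,k-1}=\tilde\Phi^-_{n+1,k-1}$ because $n+1$ is even. Its bottom-right entry is $c$ times the old upper-right entry, and $p^{k-2}\tilde\Phi_{n+1,k-1}\tilde\Phi^+_{n,k-1}=\tilde\Phi^+_{n+1,k-1}$, again because $n+1$ is even. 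If $n$ is even, the diagonal matrix becomes antidiagonal in the same way. Here $c\,\tilde\Phi^-_{n,k-1}$ absorbs the new odd index $n+1$ into $\tilde\Phi^-_{n+1,k-1}$, while $\tilde\Phi^+$ stays unchanged. In each case the power of $-\e_f(p)$ increases by one exactly on the entry that picked up the factor $c$, which gives the stated exponents up to the sign normalization discussed above.
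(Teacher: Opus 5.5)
Your induction is correct and is exactly the computation the paper compresses into ``since $a_p(f)\equiv 0\Mod\varpi$, this follows directly from the definition of $C_{n,f}$'', with integrality of the entries supplied by Lemma~\ref{PT_integral} for $k\le p+1$. Your sign bookkeeping is also right, and you should commit to it instead of hedging: for odd $n$ the lower-left entry is $(-\e_f(p))^{(n+1)/2}\tilde\Phi^-_{n,k-1}$, so the displayed statement is misprinted there (already $C_{1,f}=A_{1,f}$ has lower-left entry $-\e_f(p)p^{k-2}\tilde\Phi_{1,k-1}$, which for trivial nebentype is not congruent to $p^{k-2}\tilde\Phi_{1,k-1}$ modulo $\varpi$), but this is harmless because the proof of Theorem~\ref{thm:FL} only uses the top row of $C_{n,f}$, which the statement gets right.
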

\begin{proof} Since $a_p(f)\equiv 0\Mod \varpi$, this follows directly from the definition of $C_{n,f}$. 
\end{proof}


\begin{definition} The \emph{Iwasawa invariants} of a nonzero element $F=\sum_{i=0}^\infty c_i X^i\in \cO\lb X\rb\otimes K$ are defined by 
\[
\mu(F)=\min\{\ord_\varpi c_i\mid i\geq 0 \} \quad \text{and}\quad 
\lambda(F)=\min\{i \mid \ord_\varpi c_i=\mu(F)\}.
\]
Similarly, if $\Theta\in K[G_n]$ is nonzero then we can write $\Theta=\sum_{i=0}^{p^n-1} c_i \gamma_n^i$, where $\gamma_n$ generates $G_n$, and define
\[
\mu(\Theta)=\min\{\ord_\varpi c_i\mid i\geq 0 \}\quad \text{and} \quad \lambda(\Theta)=\min\{i \mid \ord_\varpi c_i=\mu(\Theta)\}.
\]
We follow the convention that if $F$ or $\Theta$ is zero then the $\lambda$ and $\mu$-invariants are defined to be $\infty$.
\end{definition}

In what follows, we let $\mu(f,\sharp/\flat,\omega^i)$ and $\lambda(f,\sharp/\flat,\omega^i)$ denote the Iwasawa invariants of the signed $p$-adic $L$-functions $L_p(f,\sharp/\flat,\omega^i,X)$ from Theorem~\ref{sharp/flat_n} and define 
\[
q_n=\begin{cases}p^{n-1}-p^{n-2}+\cdots +p-1 & \quad\text{if $n\ge2$ is even,}\\
p^{n-1}-p^{n-2}+\cdots +p^2-p & \quad\text{if $n\ge3$ is odd.}
\end{cases}
\]

\begin{theorem}\label{thm:FL} Assume that $f\in S_k(\Gamma_1(N),\overline\Q_p)$ is non-ordinary at $p$ and $k\leq p+1$. 
If $\mu(f,\sharp,\omega^i)=\mu(f,\flat,\omega^i)\neq \infty$ then for 
$n\gg0$ we have
\begin{align*}
\mu\big(\Theta_{n}(f,\omega^i)\big)&=\mu(f,\bullet,\omega^i),\quad \text{and}\\
\lambda\big(\Theta_{n}(f,\omega^i)\big)&= (k-1)q_n+\lambda(f,\bullet,\omega^i),
\end{align*}
where $\bullet=\flat$ if $n$ is odd and $\bullet=\sharp$ if $n$ is even. 
\end{theorem}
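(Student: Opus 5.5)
Take $j=0$ in Corollary~\ref{cor:thetaPn}. Since $Q_{n,0}(f,\omega^i)$ is the image of $\Theta_n(f,\omega^i)$ in $K[X]/(\omega_n)$, this gives
\[
\Theta_n(f,\omega^i)\equiv c_{n,1}P_n(f,\sharp,\omega^i)+c_{n,2}P_n(f,\flat,\omega^i)\Mod \omega_n,
\]
where $(c_{n,1},c_{n,2})$ is the first row of $C_{n,f}$. By Theorem~\ref{sharp/flat_n} we have $P_n(f,\bullet,\omega^i)\equiv L_p(f,\bullet,\omega^i)\Mod \ker h_n$. Unwinding \eqref{def:hn}, this means $C_{n,f}\,(P_n(f,\sharp),P_n(f,\flat))^T\equiv C_{n,f}\,(L_p(f,\sharp),L_p(f,\flat))^T \Mod \omega_{n,k-1}$, and $\omega_n\mid\omega_{n,k-1}$. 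Therefore
\[
\Theta_n(f,\omega^i)\equiv c_{n,1}L_p(f,\sharp,\omega^i)+c_{n,2}L_p(f,\flat,\omega^i)\Mod \omega_n.
\]
After scaling by a power of $\varpi$ we may assume $\mu:=\mu(f,\sharp,\omega^i)=\mu(f,\flat,\omega^i)=0$, so that both $L_p$'s lie in $\Lambda$ with nonzero reduction.

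The next step is to reduce modulo $\varpi$ using Lemma~\ref{lem:Cnf}.

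\textbf{$n$ even.} Here $c_{n,2}\equiv 0$ and $c_{n,1}\equiv \pm\tilde\Phi^-_{n,k-1}$, which by Remark~\ref{remark:PT} is a unit times $\Phi^-_{n,k-1}$. Hence $\Theta_n\equiv u\,\Phi^-_{n,k-1}L_p(f,\sharp)\Mod{(\varpi,\omega_n)}$. Modulo $\varpi$, $\Phi_m\equiv X^{p^m-p^{m-1}}$ and $\Tw^{-j}$ preserves this, since $u^{-j}(1+X)-1\equiv X$ modulo $p$ up to a unit factor. Also $\omega_n\equiv X^{p^n}$. So $\Phi^-_{n,k-1}$ reduces to $X^{(k-1)\sum_{m\text{ odd}\le n}(p^m-p^{m-1})}=X^{(k-1)q_n}$, and the exponent matches $q_n=p^{n-1}-p^{n-2}+\cdots+p-1$ for $n$ even.

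\textbf{$n$ odd.} Symmetrically, $\Theta_n\equiv u\,\Phi^+_{n,k-1}L_p(f,\flat)$, and the exponent is again $(k-1)q_n$ with $q_n=p^{n-1}-\cdots+p^2-p$.

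Thus the reduction of $\Theta_n$ in $\FF[X]/(X^{p^n})$ is a unit times $X^{(k-1)q_n+\lambda(f,\bullet)}$. Since $(k-1)q_n+\lambda(f,\bullet)<p^n$ for $n\gg0$ (because $k-1\le p$ and $q_n\approx p^{n-1}$), this reduction is nonzero. Hence $\mu(\Theta_n)=0=\mu(f,\bullet)$ and $\lambda(\Theta_n)=(k-1)q_n+\lambda(f,\bullet)$. Here $\lambda$ is computed in the variable $X=\gamma-1$, which agrees with the group-ring definition because $\gamma_n^i$ and $X$ have the same reduction behaviour modulo $\varpi$; this is standard. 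The general $\mu$ case follows by undoing the scaling.

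The main obstacle is the bound $(k-1)q_n+\lambda<p^n$. When $k=p+1$ we get $(k-1)q_n=pq_n$, and for $n$ even $pq_n=p^n-p^{n-1}+\cdots-p<p^n$, leaving a gap of roughly $p^{n-1}$, so the bound holds for large $n$. For $n$ odd the margin is similarly about $p^{n-1}$. I will therefore check that the reductions of $\Tw^{-j}(\Phi_m)$ indeed have exact degree $p^m-p^{m-1}$ modulo $\varpi$, and that the unit in Remark~\ref{remark:PT} is not killed when passing to the quotient by $\omega_n$.
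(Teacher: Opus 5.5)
Your proposal is correct and follows essentially the same route as the paper's proof. Like the paper, you specialize Corollary~\ref{cor:thetaPn}, pass from $P_n(f,\sharp/\flat,\omega^i)$ to $L_p(f,\sharp/\flat,\omega^i)$ modulo $\omega_n$, reduce modulo $\varpi$ with Lemma~\ref{lem:Cnf} and Remark~\ref{remark:PT}, and use $(k-1)q_n+\lambda(f,\bullet,\omega^i)<p^n$ for $n\gg0$. The differences are cosmetic: you take only $j=0$, justify the passage to $L_p$ directly through $\ker h_n$, and check by hand that the relevant $\Phi^{\pm}_{n,k-1}$ reduces to $X^{(k-1)q_n}$ modulo $\varpi$ instead of quoting earlier lemmas (the powers of $-\e_f(p)$ are units but need not be $\pm1$, which is harmless).
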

\begin{proof} Let $a$ denote the common $\mu$-invariant $\mu(f,\sharp,\omega^i)=\mu(f,\flat,\omega^i)$. By definition, we may replace $\Theta_{n,j}(f,\omega^{i})$ by $Q_{n,j}(f,\omega^{i})$. For $n\gg0$, Corollary~\ref{cor:thetaPn} and Theorem~\ref{thm:Lpdecomp} imply
\[
 \begin{bmatrix}
\varpi^{-a}Q_{n,j}(f,\omega^i)\\
-\epsilon_f(p)p^{k-2}\Phi_{n}\varpi^{-a}Q_{n-1,j}(f,\omega^i)
\end{bmatrix}
\equiv
 \Tw^{j}(C_{n,f})
\begin{bmatrix}
\Tw^{j}\big(\varpi^{-a}L_p(f,\sharp,\omega^i)\big)\\
\Tw^{j}\big(\varpi^{-a}L_p(f,\flat,\omega^i)\big)
\end{bmatrix}
\mod\omega_n\Oo\lb X\rb
\]
Reducing modulo the idea $(\varpi,\omega_n)$ of $\cO\lb X\rb$, it follows from Lemma~\ref{lem:Cnf} and Remark \ref{remark:PT} that
\[
\varpi^{-a}Q_{n,j}(f,\omega^i)\equiv
 \begin{cases}
\Tw^j\big(*\Phi_{n,k-1}^+\varpi^{-a}L_p(f,\flat,\omega^i)\big)\Mod (\varpi,\omega_n)&\quad\text{if $n$ is odd,}\\
\Tw^j\big(*\Phi_{n,k-1}^-\varpi^{-a}L_p(f,\sharp,\omega^i)\big)\Mod (\varpi,\omega_n)&\quad\text{if $n$ is even,}
  \end{cases}
\]
Using \cite[Lemmas 2.9 and 3.7]{GLaif} and the fact that multiplying by an element of $\cO\lb X\rb^\times$ does not change Iwasawa invariants, we have 
\begin{align*}
\lambda\big(\Tw^j(*\Phi_{n,k-1}^\circ \varpi^{-a}L_p(f,\bullet,\omega^i))\big)&=(k-1)q_n+\lambda(f,\bullet,\omega^i)\\
\mu\big(\Tw^j(*\Phi_{n,k-1}^\circ \varpi^{-a}L_p(f,\bullet,\omega^i))\big)&=0
\end{align*}
for $(\circ,\bullet)\in \{(+,\flat),(-,\sharp)\}$ and $n$ of parity $-\circ$. Since $k\leq p+1$, the sequence $p^n-(k-1)q_n\rightarrow \infty$ as $n\rightarrow\infty$. Therefore, we can take $n\gg0$ such that $(k-1)q_n+\lambda(f,\bullet,\omega^i)<p^n$.
From \cite[Lemma 2.8]{GL}, it follows that the Iwasawa invariants of $\Tw^j\left(*\Phi_{n,k-1}^\circ \varpi^{-a}L_p(f,\bullet,\omega^i)\right)$ agree with those of $\varpi^{-a}Q_{n,j}(f,\omega^i)$, which concludes the proof.
\end{proof}
\begin{remark}\nf Theorem~\ref{thm:FL} is identical to 
 \cite[Theorem 3.11]{GL}, except that the result proved here also holds at weight $k=p+1$. In \cite{GL}, an essential ingredient is the decomposition \cite[Proposition 2.11]{BFsuper} of the unbounded $p$-adic $L$-functions $L_p(f,\alpha,\omega^i,X)$ and $L_p(f,\alpha,\omega^i,X)$ in terms of a logarithm matrix $M_{log}$ (see \cite[\S2.2]{BFsuper}) coming from a choice of basis of the Wach module associated to $\overline\rho_f|_{G_{\Q_p}}$. It is this choice of basis that requires $k\leq p$. In contrast, the above proof relies only on the decomposition given in Theorem \ref{thm:Lpdecomp}, which does not require  $p$-adic Hodge theory and holds for the additional weight $k=p+1$. 
 

\end{remark}

\subsection{Modular forms of weight $p+1$} 
In this section, we restrict to level $\Gamma_0(N)$ in order to apply results of Pollack and Weston \cite{PW}. The following theorem generalizes \cite[Corollary 6.1]{GLaif}, where an analogous result is proved under the assumption that $a_p(f)=0$. The proof given here is similar to that of \cite{GLaif}. The key idea is that the Iwasawa invariants of a weight $p+1$ modular form $f$ can be calculated in two different ways: one in terms of the $p$-adic $L$-function of $f$ itself and the other in terms of the $p$-adic $L$-function of a lower weight modular form. 

\begin{theorem}\label{thm:p+1} Let $f\in S_{p+1}(\Gamma_0(N),\overline{\Q_p})$ be a $p$-non-ordinary newform and assume that $\mu(f,\sharp,\omega^i)=\mu(f,\flat,\omega^i)\neq\infty$. There exists a $p$-non-ordinary eigenform $g\in S_2(\Gamma_0(N),\overline{\Q_p})$ with $\overline\rho_f\cong \overline\rho_g$ such that if $\mu(g,\sharp,\omega^i)=\mu(g,\flat,\omega^i)\neq\infty$ then the following hold:
\begin{enumerate}
\item $\mu(f,\sharp,\omega^i)=\mu(f,\flat,\omega^i)=0$ if and only if $\mu(g,\sharp,\omega^i)=\mu(g,\flat,\omega^i)=0$.
\item If either of the conditions in (1) are true then 
\begin{align*}
\lambda\big(f,\sharp,\omega^i)&=\lambda\big(g,\flat,\omega^i), \quad \text{and}\\
\lambda\big(f,\flat,\omega^i)&=\lambda\big(g,\sharp,\omega^i)+p-1.
\end{align*}
\end{enumerate}
\end{theorem}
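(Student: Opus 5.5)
We will compute the Iwasawa invariants of the Mazur--Tate elements $\Theta_n(f,\omega^i)$ in two ways and compare them for large $n$ of each parity.

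\emph{Step 1: choosing $g$.} By \cite{FontaineEdixhoven92}, $\overline\rho_f$ has Serre weight $2$ when $k=p+1$ and $f$ is non-ordinary. Since the level is $\Gamma_0(N)$ with $p\nmid N$, Serre's conjecture (Khare--Wintenberger, Edixhoven) gives an eigenform $g\in S_2(\Gamma_0(N),\overline{\Q}_p)$ with $\overline\rho_g\cong\overline\rho_f$. Then $a_p(g)\equiv 0\Mod\varpi$: otherwise $\overline\rho_g|_{G_{\Q_p}}$ would be reducible, contradicting the irreducibility of $\overline\rho_f|_{G_{\Q_p}}$ (which is induced from a fundamental character of level $2$). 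So $g$ is non-ordinary.

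\emph{Step 2: comparing Mazur--Tate elements.} The key input is the congruence of modular symbols in \cite{PW}. The map $V_{p-1}(\FF)\to V_0(\FF)$ (evaluation of the $Y^{p-1}$-coefficient, or its dual) is compatible with the action of $\Gamma_0(N)$ and the Hecke operators away from $p$, and it sends $\overline\varphi_f^\pm$ to a nonzero multiple of $\overline\varphi_g^\pm$. Tracking the $\omega^{i-j}$-twists in Definition~\ref{def:MT2}, we expect
\[
\Theta_n(f,\omega^i)\equiv c\cdot\Theta_n(g,\omega^i)\Mod\varpi
\]
for a unit $c$, up to replacing $\Theta_n(f,\omega^i)$ by the relevant coefficient $\Theta_{n,j}$ with $j=0$ or $j=p-1$. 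We would verify carefully which coefficient appears and that the twisting character matches. Under $\mu=0$, this congruence gives $\mu(\Theta_n(f,\omega^i))=\mu(\Theta_n(g,\omega^i))=0$ and equal $\lambda$-invariants for all $n$. If both sides have positive $\mu$, the congruence gives no information, which is why part (1) is stated as an equivalence. That part follows from Theorem~\ref{thm:FL} applied to both $f$ and $g$, since each gives $\mu(\Theta_n)=\mu(\cdot,\bullet,\omega^i)$ for $n\gg0$: the $\mu$'s vanish for $f$ exactly when $\mu(\Theta_n)=0$, exactly when they vanish for $g$.

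\emph{Step 3: comparing $\lambda$-invariants.} Apply Theorem~\ref{thm:FL} with $k=p+1$ to $f$ and with $k=2$ to $g$:
\[
\lambda(\Theta_n(f))=p\,q_n+\lambda(f,\bullet),\qquad \lambda(\Theta_n(g))=q_n+\lambda(g,\bullet).
\]
Equate these, using the identities
\[
p\,q_n=q_{n+1}+1 \text{ for $n$ even},\qquad p\,q_n=q_{n+1} \text{ for $n$ odd},
\]
which come from $p(p^{n-1}-\cdots+p-1)=p^n-\cdots+p^2-p$. In reality the weight-$2$ parity labelling shifts by one, so we will rederive the $g$-side directly: expand $\Phi^{\pm}_{n,1}$ versus $\Phi^{\pm}_{n,p}$ and compute $\deg\Phi^{\pm}_{n,p}$ modulo $\varpi$ via \cite[Lemma 2.9]{GLaif}. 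Reducing mod $\varpi$, we have $\Phi_{m,p}\equiv X^{p^{m-1}(p-1)p}$ while $\Phi_{m+1,1}\equiv X^{p^m(p-1)}$, so the two agree. Hence the products $\Phi^+_{n,p}$ and $\Phi^-_{n+1,1}$ match up to the missing $\Phi_{0}$-type factor $\Phi_{0,p}\equiv X^p$ versus $X$. This offset is the source of the $p-1$.

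Matching the parities, $\sharp$ for $f$ (even $n$) corresponds to $\flat$ for $g$, giving $\lambda(f,\sharp)=\lambda(g,\flat)$. Odd $n$ for $f$ pairs $\flat$ with $\sharp$ for $g$ and produces the extra term $p-1$.

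The main obstacle is Step 2: pinning down the precise mod-$\varpi$ congruence between the weight-$(p+1)$ and weight-$2$ Mazur--Tate elements, including the correct twist and component. The delicate secondary point is the exact bookkeeping of degrees in Step 3 that yields $p-1$ rather than some other shift. I would first follow \cite[Corollary 6.1]{GLaif} verbatim, since its $a_p=0$ computation involves only mod-$\varpi$ data, and $a_p\equiv0$ holds here too.
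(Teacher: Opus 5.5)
There is a genuine gap, and it is in Step~2, where you suspected. The same-level congruence $\Theta_n(f,\omega^i)\equiv c\,\Theta_n(g,\omega^i)\pmod{\varpi}$ is false. Theorem~\ref{thm:FL} gives $\lambda(\Theta_n(f,\omega^i))=pq_n+O(1)$ but $\lambda(\Theta_n(g,\omega^i))=q_n+O(1)$; compare the rows $0,2,6,20,60,182$ and $0,0,2,6,20,60$ of Table~\ref{tablep1}. So the equation $pq_n+\lambda(f,\bullet,\omega^i)=q_n+\lambda(g,\bullet,\omega^i)$ in your Step~3 cannot hold for $n\gg0$, since $(p-1)q_n\to\infty$. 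Your degree comparison of $\Phi^+_{n,p}$ with $\Phi^-_{n+1,1}$ cannot repair this, because the only bridge between $f$ and $g$ in your argument is that congruence.

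The error lies in the claim that $P\mapsto P(0,1)$ (the $Y^{p-1}$-coefficient) is a $\Gamma_0(N)$-equivariant map $V_{p-1}(\FF)\to V_0(\FF)$. With the paper's action, $(P|\gamma)(0,1)=P(-c,a)$, so this map is equivariant only when $c\equiv 0\pmod p$, i.e.\ for $\Gamma_0(Np)$. In fact no such map exists: $V_{p-1}(\FF)$ is the irreducible Steinberg representation of $\mathrm{SL}_2(\FF_p)$, onto which $\Gamma_0(N)$ surjects. Separately, your identities are misstated: in fact $pq_n=q_{n+1}$ for $n$ even and $q_{n+1}=pq_n+p-1$ for $n$ odd.

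The missing input is the comparison \cite[Corollary 5.3]{PW}, which the paper quotes for both the $\mu$- and $\lambda$-statements. It is essentially your construction carried out at the correct level. Roughly, put $\tau=\begin{psmallmatrix}p&0\\0&1\end{psmallmatrix}$ and $\psi(D)=\bar\varphi_f(D)(0,1)$.

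\begin{enumerate}
\item $\psi$ is a nonzero weight-$2$ symbol for $\Gamma_0(Np)$. Evaluating $\bar\varphi_f|T_p=a_p(f)\bar\varphi_f\equiv 0$ at $(0,1)$ gives $\psi|U_p=0$, because the $\tau$-term picks up a factor $p^{p-1}$.
\item The $p$-new part of the $\overline\rho$-eigenspace has $U_p=\pm1$. On the $p$-old part, $U_p\bar\varphi_g\equiv-\bar\varphi_g|\tau$ and $U_p(\bar\varphi_g|\tau)=p\,\bar\varphi_g\equiv0$. Hence $\psi$ is a unit multiple of $D\mapsto\bar\varphi_g(pD)$ for a suitable $g$. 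This is also how $g$ should be chosen, rather than by a separate appeal to Serre's conjecture.
\item $\bar\varphi_g(p\cdot)$ evaluated at $\{\infty\}-\{-a/p^{n+1}\}$ depends only on $a$ modulo $p^n$. Therefore $\Theta_n(f,\omega^i)\equiv c\,\nu^n_{n-1}\Theta_{n-1}(g,\omega^i)\pmod\varpi$.
\item Consequently $\mu(\Theta_n(f,\omega^i))=0$ if and only if $\mu(\Theta_{n-1}(g,\omega^i))=0$. In that case $\lambda(\Theta_n(f,\omega^i))=p^n-p^{n-1}+\lambda(\Theta_{n-1}(g,\omega^i))$.
\end{enumerate}

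Now apply Theorem~\ref{thm:FL} to $g$ at level $n-1$. This turns the right side into $pq_n+\lambda(g,\flat,\omega^i)$ for $n$ even and $pq_n+p-1+\lambda(g,\sharp,\omega^i)$ for $n$ odd. Comparing with Theorem~\ref{thm:FL} for $f$ gives (2). Your argument for (1) goes through unchanged once this level-shifted congruence replaces yours.

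Your instinct that the parity labelling shifts by one is right. But the shift has to come from the passage through level $Np$, not from a degree count on the $\Phi$'s.
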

\begin{proof}  
By \cite[Theorem 2.6]{FontaineEdixhoven92}, since $f$ has weight $p+1$ we know that the representation $\overline{\rho_f}|_{G_{\Q_p}}$ is irreducible and has Serre weight two (in the sense of \cite[Definition VII.1.7]{cornell2013modular}). 
It follows from \cite[Corollary 5.3(1)]{PW} that there exists an eigenform $g\in S_2(\Gamma_0(N))$ with $\overline{\rho_f}\cong \overline{\rho_g}$ such that (after applying Theorem \ref{thm:FL} to the $\mu$-invariants of $g$) we have
\begin{equation*}
\text{$\mu(\Theta_n(f,\omega^i))=0$ for $n\gg0$ if and only if $\mu(g,\sharp,\omega^i)=\mu(g,\sharp,\omega^i)=0$.}
\end{equation*}
Assertion (1) now follows from Theorem \ref{thm:FL} (applied to the $\mu$-invariants of $f$). 

For the second assertion, note that if either of the conditions of (1) are true then we can use \cite[Corollary 5.3(2)]{PW} and Theorem \ref{thm:FL} (applied to the $\lambda$-invariants of $g$) to write the $\lambda$-invariants of $\Theta_n(f,\omega^i)$ in terms of those coming from the $p$-adic $L$-function of $g$ as follows:
\begin{equation*}
\lambda(\Theta_n(f,\omega^i))=pq_n+ \begin{cases} \lambda(g,\flat,\omega^i) \quad& \text{if $n\gg0$ is even,}\\
p-1+\lambda(g,\sharp,\omega^i) \quad& \text{if $n\gg0$ is odd.}
\end{cases}
\end{equation*}
On the other hand, applying Theorem \ref{thm:FL} directly to $f$ yields the description
\[
\lambda(\Theta_n(f,\omega^i))=pq_n+\begin{cases}\lambda(f,\sharp,\omega^i)&\quad \text{if $n\gg0$ is even}\\
\lambda(f,\flat,\omega^i)&\quad \text{if $n\gg0$ is odd.}
\end{cases}
\] 
Combining these two expressions completes the proof.
\end{proof}

Numerical examples of Theorem~\ref{thm:p+1} can be found in Table~\ref{tablep1}. Both the Magma and LMFDB labels of each modular form are provided; to initiate the form in Magma, use the command \texttt{Newform("label")}. These examples were calculated in Magma \cite[version 2.27-3]{Magma} and our code is available upon request. 

\begin{table}[h]
\begin{center}
\scalebox{1}{
\begin{tabular}{|c||c|c||c|c|c|c|c|c||c|c||c|c|}
\hline
$p$& $k$ 	& $N$ & 0 & 1 &2&3&4&5& $\lambda^\sharp $& $\lambda^\flat $&Magma & LMFDB\\ 
\hline
\hline
3& 4& 26&0& 2&6 & 20&60 &182  & 0&2 &\texttt{G0N26k4C}&\href{https://www.lmfdb.org/ModularForm/GL2/Q/holomorphic/26/4/a/a/}{\texttt{26.4.a.a}}\\
& 2& 26&0& 0&2 & 6&20 &60 &0 &0 &\texttt{G0N26k2B}&\href{https://www.lmfdb.org/ModularForm/GL2/Q/holomorphic/26/2/a/b/}{\texttt{26.2.a.b}}\\
\hline
3& 4&158  & 0& 0&7 & 21&61 &183&1 &3 &\texttt{G0N158k4A}&\href{https://www.lmfdb.org/ModularForm/GL2/Q/holomorphic/158/4/a/a/}{\texttt{158.4.a.a}}\\
& 2&158  &0 &1 &3 &7 &21 &61 & 1&1 &\texttt{G0N158k2E}&\href{https://www.lmfdb.org/ModularForm/GL2/Q/holomorphic/158/2/a/c/}{\texttt{158.2.a.c}}\\
\hline
3& 4& 392&0&1 & 5& 21& 63&183 &3 &3 & \texttt{G0N392k4E}&\href{https://www.lmfdb.org/ModularForm/GL2/Q/holomorphic/392/4/a/a/}{\texttt{392.4.a.a}}\\
& 2&392  &0&1 &3 & 9& 21& 63& 1& 3& \texttt{G0N392k2E}&\href{https://www.lmfdb.org/ModularForm/GL2/Q/holomorphic/392/2/a/d/}{\texttt{392.2.a.d}}\\
\hline
5& 6&14  &0&4 &20 &104 & 520&2604 &0& 4 &\texttt{G0N14k6A}&\href{https://www.lmfdb.org/ModularForm/GL2/Q/holomorphic/14/6/a/b/}{\texttt{14.6.a.b}}\\
& 2& 14  &0& 0&4 &20 &104 & 520& 0&  0&\texttt{G0N14k2A}&\href{https://www.lmfdb.org/ModularForm/GL2/Q/holomorphic/14/2/a/a/}{\texttt{14.2.a.a}}\\ 
\hline
7&8 &15  & 0& 6& 42& 300& 2100&14706 &0 & 6&\texttt{G0N15k8B}&\href{https://www.lmfdb.org/ModularForm/GL2/Q/holomorphic/15/8/a/a/}{\texttt{15.8.a.a}}\\
& 2&15  &0& 0&6 & 42& 300&2100 & 0& 0& \texttt{G0N15k2A}&\href{https://www.lmfdb.org/ModularForm/GL2/Q/holomorphic/15/2/a/a/}{\texttt{15.2.a.a}}\\
\hline
\end{tabular}
}
\caption{Invariants $\lambda(\Theta_n)$, $0\leq n\leq 5$, attached to rational $p$-non-ordinary newforms of level $\Gamma_0(N)$. Each block contains a pair of forms of weight 2 and $p+1$ such that their residual representations are isomorphic. The signed $\lambda$-invariants are estimated from the Mazur-Tate elements using Theorem \ref{thm:FL}. Note that each pair of signed invariants satisfies the behavior in Theorem \ref{thm:p+1}. }
\label{tablep1}
\end{center}
\end{table}

\bibliographystyle{alpha}
\bibliography{references}
\end{document}